\newtheorem{theorem}{Theorem}[section]
\newtheorem{lemma}[theorem]{Lemma}
\newtheorem{corollary}[theorem]{Corollary}
\newtheorem{conjecture}[theorem]{Conjecture}
\DeclareMathOperator{\Pee}{\mathbf{Pr}}
\DeclareMathOperator{\E}{\mathbf{E}}
\title{Asymptotically Optimal Proper Conflict-Free Colouring}
\author{Chun-Hung Liu\footnote{Department of Mathematics, Texas A\&M University, USA. chliu@tamu.edu. Partially supported by NSF under award DMS-1954054 and CAREER award DMS-2144042.},
\quad Bruce Reed \footnote{Mathematical Institute, Academia Sinica, Taiwan. bruce.al.reed@gmail.com.  Supported by  NSTC Grant 112-2115-M-001 -013 -MY3}}
\begin{document}

\maketitle

\begin{abstract}
    A proper conflict-free colouring of a graph is a colouring of the vertices such that any two adjacent vertices receive different colours, and for every non-isolated vertex $v$, some colour appears exactly once on the neighbourhood of $v$.
    Caro, Petru\v{s}evski and \v{S}krekovski conjectured that every connected graph with maximum degree $\Delta \geq 3$ has a proper conflict-free colouring with at most $\Delta+1$ colours.
    This conjecture holds for $\Delta=3$ and remains open for $\Delta \geq 4$.
    In this paper we prove that this conjecture holds asymptotically; namely, every graph with maximum degree $\Delta$ has a proper conflict-free colouring with $(1+o(1))\Delta$ colours.
\end{abstract}

\section{Introduction}

The study of conflict-free colouring of hypergraphs, in which every edge contains a vertex whose colour appears on no other vertex of the edge, was motivated by a frequency assignment problem in cellular networks. 
By choosing hypergraphs appropriately, one can obtain applications to several other colouring notions, such as acyclic colouring, star colouring and frugal colouring.
See \cite{cl} for details.

In this paper our focus is  proper conflict-free colouring of graphs, which were recently introduced by Fabrici, Lu\v{z}ar, Rindo\v{s}ova and Sot\'{a}k \cite{flrs}.  This parameter has attracted wide attention \cite{alo,cps,cckp,cckp2,cl,h,kp,l}.

A {\it proper colouring} of a graph is a colouring of its vertices such that no adjacent vertices receive the same colour.
A {\it proper conflict-free colouring} of a graph $G$ is a proper colouring of $G$ such that for every non-isolated vertex $v$ of $G$, some neighbour of $v$ receives a colour that is not received by any other neighbour of $v$. So, a proper conflict-free colouring of  a graph lies between a proper colouring of   a graph and proper colouring of its square, which requires every  two neighbours of a vertex to  receive different colours.

It is well known  that every graph with maximum degree $\Delta$ can be  properly  coloured with at most $\Delta+1$ colours using a simple greedy procedure. 
On the other hand, there are some graph with maximum degree $\Delta$ for which a proper colouring of the square requires $\Omega(\Delta^2)$ colours. 
It is natural to ask whether the number of colours needed in a proper conflict-free colouring of a graph is closer to $\Delta+1$ or $\Omega(\Delta^2)$. 
Caro, Petru\v{s}evski and \v{S}krekovski \cite{cps} proposed the following conjecture. 

\begin{conjecture}[\cite{cps}] \label{conj_deg}
Every connected graph with maximum degree $\Delta \geq 3$ has a proper conflict-free colouring with at most $\Delta+1$ colours.
\end{conjecture}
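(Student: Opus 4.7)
The plan is to strengthen Conjecture~\ref{conj_deg} to a list version and to induct on $|V(G)|$ for fixed $\Delta \geq 4$ (the case $\Delta = 3$ being already known). Specifically, I would attempt to show that every graph with maximum degree at most $\Delta$ admits a proper conflict-free colouring from any assignment of $(\Delta+1)$-element lists to its vertices, with a short catalogue of small exceptional graphs excluded. The list strengthening is essential: after removing part of $G$ to invoke induction, the colours already used on the boundary have to be encoded as list restrictions on the removed part.

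Step one is to identify a rich family of \emph{reducible configurations}: small subgraphs $H \subseteq G$ such that any proper conflict-free list colouring of $G - V(H)$ (or of $G$ minus a well-chosen vertex) extends to $G$. Natural candidates include vertices of degree strictly less than $\Delta$, adjacent pairs of low-degree vertices, short cycles through such vertices, and near-cliques with a unique exterior connection. For each configuration I would verify that the lists at $V(H)$ are large enough to simultaneously (a) avoid already-coloured neighbours, (b) preserve the singleton-colour condition at each exterior neighbour of $H$, and (c) create such a singleton colour at each vertex of $V(H)$ itself. Condition (b) is the delicate one, since recolouring inside $H$ can destroy conflict-freeness at exterior neighbours that were fine in the induction hypothesis.

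Step two is to handle the irreducible remainder: a connected graph $G$ with $\Delta(G) = \delta(G) = \Delta$, no reducible configuration, and $G \neq K_{\Delta+1}$. Here I would attempt a two-stage colouring. First use Brooks' theorem to obtain a proper $\Delta$-colouring, keeping one of the $\Delta+1$ available colours in reserve. Second, for each vertex $v$ that currently lacks a singleton colour in $N(v)$, perform a Kempe-type swap: pick a neighbour $u$ whose colour is duplicated in $N(v)$ and recolour $u$ to a colour absent from $N(v)$, possibly the reserve colour. The scheduling of these swaps must not destroy singleton colours at previously processed vertices, and here the near-regularity and high local girth forced by irreducibility should create enough slack for the argument to close.

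The main obstacle I foresee is the total absence of slack in the target $\Delta+1$: the probabilistic tools that deliver $(1+o(1))\Delta$ all lose at least an additive lower-order term and so cannot in principle yield a $\Delta+1$ bound, which means the heart of any proof must be a Brooks-type structural theorem classifying the obstructions to proper conflict-free $(\Delta+1)$-colouring exactly. Producing this classification, and in particular showing that the reducible-configuration catalogue of step one is rich enough that every irreducible graph admits the Kempe-swap procedure of step two without clashes, is the step I expect to be genuinely difficult---and is presumably what keeps the conjecture open for $\Delta \geq 4$.
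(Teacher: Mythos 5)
This statement is a conjecture, not a theorem of the paper: the authors explicitly state that it remains open for $\Delta \geq 4$ and prove only the asymptotic relaxation $\Delta + O(\Delta^{2/3}\log\Delta)$ (Theorem~\ref{maintheorem}). So there is no proof in the paper to compare against, and your proposal does not close the gap either --- it is a programme, not a proof, and you say as much in your final paragraph. The two places where it is not merely incomplete but where the stated plan would break down are worth naming. First, the Kempe-swap step in your second stage is unsound as described: recolouring a neighbour $u$ of $v$ to a colour absent from $N(v)$ need not preserve properness at $u$ (that colour may well appear on $N(u)$), and it can simultaneously destroy the singleton condition at every other vertex $x$ for which $u$'s old colour was the unique colour on $N(x)$, as well as create a new duplication on $N(y)$ for every neighbour $y$ of $u$. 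No local girth or regularity assumption obviously controls this cascade, and you give no argument that it terminates. Second, the list strengthening is a double-edged sword: while it is the standard device for making induction go through, it is a strictly stronger statement whose truth is itself unknown (the best known list bound is roughly $1.655\Delta$, from Cranston and Liu), so you may be inducting on a false statement; and in any case you have not exhibited a single reducible configuration with a verified condition~(b), which you correctly identify as the delicate point.

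If you want to make progress in the direction the paper actually takes, note that its entire architecture is built around accepting the lower-order loss you observe is unavoidable for probabilistic arguments: it splits $V(G)$ into low- and high-degree parts, uses a small palette on a random subset $Z$ of the high-degree part to create the singleton colours (via the Lov\'asz Local Lemma, Azuma, and McDiarmid's permutation inequality), and then partitions the rest into $\lceil 18\Delta^{1/3}\rceil$ classes coloured with disjoint palettes. None of that machinery can reach $\Delta+1$ exactly, which is consistent with your closing diagnosis; but diagnosing why the conjecture is hard is not a proof of it.
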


The condition $\Delta \geq 3$ in Conjecture \ref{conj_deg} is required since the 5-cycle requires five different colours.
Conjecture \ref{conj_deg} remains open for $\Delta \geq 4$, while the case $\Delta=3$ (even for the list-colouring setting) follows from a result of Liu and Yu \cite{ly} about linear colouring.
Cranston and Liu \cite{cl} showed that every graph with maximum degree $\Delta$ has a proper conflict-free colouring with at most $2\Delta+1$ colours, based on an observation of Pach and Tardos \cite{pt} about conflict-free colouring of hypergraphs.
They \cite{cl} also proved that $(1.6550826+o(1))\Delta$ colours are sufficient even for the list-colouring setting.
Moreover, they \cite{cl} proved that every graph with maximum degree $\Delta$ is fractionally properly conflict-free $(1+o(1))\Delta$-colourable.

Conjecture \ref{conj_deg} is known to hold asymptotically or exactly when extra assumptions are made.
Kamyczura and Przyby\l{}o \cite{kp} proved a result when the graph has large minimum degree: there exists $\Delta_0$ such that every graph of maximum degree $\Delta \ge \Delta_0$ and minimum degree $\delta>1500 \log\Delta$ has a proper conflict-free colouring with at most $\Delta+\frac{600\Delta\log\Delta}{\delta}$ colours.
(The base of $\log$ is $e$ in this paper.)
In addition, Cho, Choi, Kwon and Park \cite{cckp} proved that Conjecture \ref{conj_deg} holds for chordal graphs.

In this paper we prove Conjecture \ref{conj_deg} asymptotically in full generality:

\begin{theorem}
\label{maintheorem}

There exists a positive integer $\Delta_0$ such that if $\Delta \geq \Delta_0$, then every graph of maximum degree at most $\Delta$ has a proper conflict-free colouring with at most $\Delta+100106\Delta^{\frac{2}{3}}\log\Delta$ colours.  

\end{theorem}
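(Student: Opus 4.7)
My plan is a two-palette randomised construction combined with the Lov\'asz Local Lemma (LLL). Set $t := \lceil 100106\,\Delta^{2/3}\log\Delta\rceil$ and split the palette of $\Delta + t$ colours into a \emph{main} palette $A$ with $|A| = \Delta + 1$ and a \emph{reserve} palette $B$ with $|B| = t - 1$. The aim is a proper colouring in which, for every vertex $v$, at least one colour (from $A$ or $B$) appears exactly once in $N(v)$.

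The construction proceeds in three steps. First, I would independently place each vertex into a reserve set $R$ with probability $p := c\Delta^{-1/3}$ for a suitable constant $c$, so that $\E[|N(v)\cap R|]$ is of order $\Delta^{2/3}$ for vertices of near-maximum degree. Second, properly colour $V \setminus R$ from $A$; Brooks' theorem provides this outside of the $K_{\Delta+1}$-component case, which can be isolated and handled separately. Third, colour each vertex of $R$ independently and uniformly at random from $B$. For each vertex $v$ let $E_v$ be the bad event that no colour appears exactly once in $N(v)$. I would control $\Pee(E_v)$ by counting $B$-singletons among neighbours in $R$: conditional on $|N(v)\cap R| = x$, the expected number of such singletons is $x(1-1/|B|)^{x-1}$, and with $x$ concentrated near $d(v)p \asymp \Delta^{2/3}$ (by Chernoff) and $|B| \asymp \Delta^{2/3}\log\Delta$, this expectation is of order $\Delta^{2/3}$; a further concentration bound (Chernoff or Janson) then yields $\Pee(E_v) \le \exp(-\Omega(\Delta^{2/3}))$. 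Since $E_v$ depends only on the random choices at vertices within distance two of $v$, the dependency degree in the LLL graph is $O(\Delta^2)$, so the LLL inequality closes comfortably for large $\Delta$.

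The main obstacle I would expect concerns the \emph{low- and moderate-degree regime}. For $v$ with $d(v) \ll \Delta^{2/3}$ the set $N(v) \cap R$ is very likely empty, so the reserve palette cannot rescue $v$; one must instead argue that the main-palette proper colouring by itself creates a singleton in $N(v)$, either by introducing additional randomness into the main colouring (local swaps, short alternative lists per vertex, or a small auxiliary palette) or by exploiting the ample slack $|A| - d(v)$. A further subtlety is making the three stages fit into a single LLL framework: the dependence between $R$, the proper colouring of $V \setminus R$, and the $B$-colouring must be handled with care, likely by first fixing a favourable realisation of $R$ (via Chernoff plus a preliminary LLL) and then applying LLL to the $B$-colouring alone. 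The specific exponent $2/3$ is what one expects from balancing the Chernoff tail for $|N(v)\cap R|$ against the Poisson-like singleton-count concentration controlled by $|B|$, while the $\log\Delta$ factor absorbs the $O(\Delta^2)$ dependency degree in LLL.
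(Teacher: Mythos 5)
Your palette-reservation sketch identifies the right scaling ($\Delta^{2/3}\log\Delta$ extra colours) and one of the right tools (a random reserve set plus the Lov\'asz Local Lemma), but as written it has two genuine gaps. First, assigning each vertex of $R$ an independent uniform colour from $B$ does not produce a \emph{proper} colouring: two adjacent vertices of $R$ collide with probability $1/|B|\approx\Delta^{-2/3}/\log\Delta$, and since each such collision event depends on $\Theta(\Delta)$ other events, the Local Lemma does not close ($4pd\approx 4\Delta^{1/3}/\log\Delta\gg1$). One must discard conflicting assignments after the random step; this is exactly what the paper's Lemma~\ref{Mtheorem} does when it sets $f(y)=c_y$ only if $c_y$ is unrepeated on $N^*(y)\cap Y$, so that the partially coloured set $Z$ is automatically properly coloured. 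Your proposal never says how the reserve colouring is made proper.

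Second, and more seriously, the low- and moderate-degree regime is the crux of the problem, not a side remark, and your proposed remedies are too vague to constitute a proof. For any $v$ with $d(v)=O(\Delta^{1/3})$ the set $N(v)\cap R$ is empty with probability bounded away from $0$, so the reserve palette gives nothing; and a fixed greedy or Brooks colouring of $V\setminus R$ from $A$ has no reason to place a colour exactly once on $N(v)$ --- ``slack'' in $|A|-d(v)$ does not by itself prevent a small neighbourhood from being paired up, and ``local swaps'' and ``short alternative lists'' are not developed enough to check. The paper resolves this regime by an entirely different mechanism: it splits $V$ into $L=\{v:d(v)\le\Delta^{1/3}\}$ and $H$, greedily colours $L$ with only $O(\Delta^{1/3})$ colours so that \emph{every} vertex with a neighbour in $L$ automatically gets a uniquely-coloured neighbour (Lemma~\ref{Ltheorem}); only vertices with all neighbours in $H$ enter the probabilistic phase, and even for those the argument does not stop at a single LLL step but passes to a partition of the surviving high-degree vertices into $\Theta(\Delta^{1/3})$ classes coloured with disjoint palettes, together with an auxiliary graph $G''$ engineered so that any leftover low-degree vertex still sees a singleton colour (Lemma~\ref{partitiontheorem_simple}). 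Until you supply a concrete replacement for this $L$-greedy step and a mechanism ensuring properness of the reserve colouring, the proof has two unfilled holes.
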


The coefficient 100106 can be significantly reduced and we made no attempt to minimize it.

We prove Theorem \ref{maintheorem}  using probabilistic methods.
We review some concentration inequalities in Section \ref{sec:prelim}.
We prove Theorem 1 in Section \ref{sec:proofMain} assuming two main lemmas, and prove these two lemmas in Sections \ref{sec:proofMtheorem} and \ref{sec:proofPartitiontheorem}.

\section{Preliminary} \label{sec:prelim}

In this section, we review some probabilistic tools  that will be used in the rest of the paper.
We denote the expectation and medium of a random variable $X$ by $\E[X]$ and ${\rm Med}(X)$, respectively.

\begin{theorem}[Chernoff Bound (see Page 43 in \cite{mr_book})] \label{chernoff}
Let $n$ be a positive integer.
Let $X_1,X_2,...,X_n$ be random variables such that for each $i \in [n]$, $X_i=1$ with probability $p$, and $X_i=0$ with probability $1-p$.
Let $X = \sum_{i=1}^nX_i$.
Then $\Pee(|X-np| > t) < 2e^{-\frac{t^2}{3np}}$.
\end{theorem}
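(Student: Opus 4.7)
The plan is to prove this via the standard Bernstein exponential moment method, handling the upper tail $\Pee(X > np+t)$ and the lower tail $\Pee(X < np-t)$ separately and then combining via a union bound (which produces the factor of $2$).

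For the upper tail I would fix a parameter $\lambda > 0$ to be chosen later and apply Markov's inequality to $e^{\lambda X}$, yielding $\Pee(X > np+t) \leq e^{-\lambda(np+t)}\E[e^{\lambda X}]$. Since the $X_i$ are independent Bernoulli$(p)$, the moment generating function factors as $\E[e^{\lambda X}] = (1-p+pe^{\lambda})^n$, and the elementary inequality $1+x \leq e^x$ bounds this by $\exp(np(e^{\lambda}-1))$. Optimizing the exponent over $\lambda$ (the minimizer is $\lambda = \log(1+t/(np))$) gives the sharp Chernoff form
$$\Pee(X > np+t) \leq \exp\bigl(-np \cdot f(t/(np))\bigr), \quad f(\delta) := (1+\delta)\log(1+\delta) - \delta.$$
An analogous computation applied to $e^{-\lambda X}$ gives $\Pee(X < np - t) \leq \exp(-np \cdot g(t/(np)))$, where $g(\delta) := (1-\delta)\log(1-\delta) + \delta$.

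To convert these sharp but awkward bounds to the Gaussian form $e^{-t^2/(3np)}$, I would establish the one-variable inequalities $f(\delta) \geq \delta^2/3$ on $[0,1]$ and $g(\delta) \geq \delta^2/2$ on $[0,1)$. Each reduces to checking that a smooth difference vanishes at $\delta = 0$ and has nonnegative derivative on the relevant interval, which is verifiable by a second differentiation (e.g.\ $f''(\delta) = 1/(1+\delta)$ and $(\delta^2/3)'' = 2/3$, and $f'(0) = 0 = (\delta^2/3)'\big|_{\delta=0}$). Substituting $\delta = t/(np)$ then converts the exponents to $t^2/(3np)$ and $t^2/(2np) \leq t^2/(3np)$, and summing the two tail bounds gives the claimed $2e^{-t^2/(3np)}$. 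The main obstacle is the regime $t > np$ (i.e.\ $\delta > 1$), where the inequality $f(\delta) \geq \delta^2/3$ genuinely fails; this is resolved by noting that the conclusion is vacuous once $t \geq n(1-p)$ (since $X \leq n$ deterministically forces the event to be empty), and in any case the probabilistic arguments of Sections \ref{sec:proofMtheorem} and \ref{sec:proofPartitiontheorem} invoke the bound in the regime $t = o(np)$, so the elementary inequality applies comfortably where it is used.
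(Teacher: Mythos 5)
The paper does not prove this statement; it is quoted from Molloy--Reed, and your exponential-moment argument is exactly the standard proof given there, so in that sense your approach matches the source. Your derivation of the sharp forms $e^{-npf(\delta)}$ and $e^{-npg(\delta)}$ and the reduction to the one-variable inequalities $f(\delta)\ge\delta^2/3$ and $g(\delta)\ge\delta^2/2$ is correct; for the lower tail, $\delta<1$ holds automatically since $\Pee(X<np-t)=0$ when $t\ge np$, so that case is genuinely complete. Two caveats. First, a small technical one: for $f(\delta)\ge\delta^2/3$ on $[0,1]$ the second-derivative comparison $f''(\delta)=1/(1+\delta)\ge 2/3$ only holds for $\delta\le 1/2$, so you should instead verify directly that $h'(\delta)=\log(1+\delta)-2\delta/3$ is nonnegative on $[0,1]$ (it is: $h'$ increases on $[0,1/2]$, decreases thereafter, and $h'(1)=\log 2-2/3>0$), which still yields the claim. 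Second, and more substantively: your handling of the upper tail for $t>np$ does not close the gap, because the conclusion is only vacuous when $t\ge n(1-p)$, and for $np<t<n(1-p)$ the stated Gaussian-form bound can genuinely fail (e.g.\ $p=1/n$, $t=10$ gives $\Pee(X\ge 11)\approx 1/11!$, far larger than $2e^{-100/3}$). No proof can repair this, because the theorem as transcribed is missing the hypothesis $0\le t\le np$ that appears in the cited source; with that hypothesis restored, your argument is a complete and correct proof. You are right that the paper only ever invokes the bound where the resulting probability estimate is valid, but that is a defect of the statement rather than something your proof can absorb.
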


\begin{theorem}[Lov\'{a}sz Local Lemma \cite{el}] \label{lll}
Let $p<1$ be a nonnegative real number.
Let $d$ be a nonnegative integer.
Let ${\mathcal E}$ be a set of events such that for every $A \in {\mathcal E}$, $\Pee(A) \leq p<1$ and $A$ is independent of a set of all but at most $d$ of the other events.
If $4pd<1$, then with positive probability, none of the events in ${\mathcal E}$ occurs.
\end{theorem}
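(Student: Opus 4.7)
The plan is to prove Theorem \ref{lll} via the standard route through the asymmetric Lov\'{a}sz Local Lemma. In the asymmetric form one assigns to each $A \in {\mathcal E}$ a real $x_A \in [0,1)$ and assumes $\Pee(A) \leq x_A \prod_{B \in N(A)}(1-x_B)$, where $N(A) \subseteq {\mathcal E} \setminus \{A\}$ is the set of events not independent of $A$; the conclusion becomes $\Pee(\bigcap_{A \in {\mathcal E}} \bar A) \geq \prod_A (1-x_A) > 0$. To recover the symmetric statement, I would treat $d = 0$ trivially by mutual independence (using $p < 1$), and for $d \geq 1$ set every $x_A = \tfrac{1}{2d}$, verifying $x_A(1-x_A)^d \geq \tfrac{1}{4d} > p$ using the elementary inequality $(1 - \tfrac{1}{2d})^d \geq \tfrac{1}{2}$ together with the hypothesis $4pd < 1$.

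The core technical statement, which I would prove by induction on $|S|$, is that for every $A \in {\mathcal E}$ and every finite $S \subseteq {\mathcal E} \setminus \{A\}$ with $\Pee(\bigcap_{B \in S} \bar B) > 0$,
\begin{equation*}
\Pee\!\left(A \;\middle|\; \bigcap_{B \in S} \bar B\right) \;\leq\; x_A.
\end{equation*}
The base case $S = \emptyset$ is immediate since $\Pee(A) \leq x_A \prod_{B \in N(A)}(1-x_B) \leq x_A$. For the inductive step, partition $S = S_1 \cup S_2$ with $S_1 = S \cap N(A)$ and $S_2 = S \setminus N(A)$, and rewrite the conditional probability as the ratio
\begin{equation*}
\frac{\Pee\bigl(A \cap \bigcap_{B \in S_1}\bar B \,\big|\, \bigcap_{C \in S_2}\bar C\bigr)}{\Pee\bigl(\bigcap_{B \in S_1}\bar B \,\big|\, \bigcap_{C \in S_2}\bar C\bigr)}.
\end{equation*}
Bound the numerator above by $\Pee(A \mid \bigcap_{C \in S_2}\bar C) = \Pee(A) \leq x_A \prod_{B \in N(A)}(1-x_B)$, where the equality uses that $A$ is jointly independent of the events in $S_2$. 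Expand the denominator by the chain rule as $\prod_{B \in S_1}\Pee(\bar B \mid \ldots)$, each factor of which is at least $1 - x_B$ by the induction hypothesis applied to a strictly smaller conditioning set. Dividing and cancelling leaves $x_A$, proving the claim.

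Granted the claim, I would enumerate ${\mathcal E} = \{A_1,\ldots,A_m\}$ and apply the chain rule once more,
\begin{equation*}
\Pee\!\left(\bigcap_{i=1}^m \bar A_i\right) \;=\; \prod_{i=1}^m \left(1 - \Pee\!\left(A_i \;\middle|\; \bigcap_{j<i}\bar A_j\right)\right) \;\geq\; \prod_{i=1}^m (1 - x_{A_i}) \;>\; 0,
\end{equation*}
which completes the argument. The subtle point I expect to handle with care is the identity $\Pee(A \mid \bigcap_{C \in S_2}\bar C) = \Pee(A)$: it requires interpreting ``$A$ is independent of a set of all but at most $d$ of the other events'' as \emph{joint} (mutual) independence of $A$ from the entire collection outside $N(A)$, not merely pairwise independence from each event there. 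Once this convention is fixed and the induction is set up on $|S|$, the remainder is bookkeeping with conditional probabilities.
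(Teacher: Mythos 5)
The paper does not prove this statement: it is the classical symmetric Lov\'asz Local Lemma, quoted from Erd\H{o}s--Lov\'asz \cite{el} and used as a black box, so there is no in-paper argument to compare yours against. Your proposal is the standard and correct derivation: prove the asymmetric (general) form by induction on the size of the conditioning set, then specialize with $x_A=\tfrac{1}{2d}$, where $x_A(1-x_A)^{|N(A)|}\ge \tfrac{1}{2d}\bigl(1-\tfrac{1}{2d}\bigr)^{d}\ge\tfrac{1}{4d}>p$ follows from Bernoulli's inequality and $4pd<1$. The two points you flag as needing care are exactly the right ones: the hypothesis must be read as \emph{mutual} independence of $A$ from the whole collection outside $N(A)$ (this is how the paper uses it later, e.g.\ ``mutually independent of a set of all but at most $\Delta^4$ of the other events''), and the chain-rule denominators are positive because each partial intersection contains $\bigcap_{B\in S}\bar B$. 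One cosmetic remark: $N(A)$ should be defined as the complement of the guaranteed mutual-independence set rather than as ``the events not independent of $A$,'' since pairwise independence of $A$ from an event does not force that event out of $N(A)$; your proof in fact uses $N(A)$ in the correct sense.
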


\begin{theorem}[Azuma's Inequality \cite{a}] \label{azuma}
Let $X$ be a random variable determined by $n$ trials $T_1,T_2,...,T_n$ such that for each $i$, and any two sequences of outcomes $t_1,t_2,...,t_i$ and $t_1,t_2,...,t_{i-1},t_i'$, 
$$|\E[X|T_1=t_1,T_2=t_2,...,T_i=t_i] - \E[X|T_1=t_1,T_2=t_2,...,T_{i-1}=t_{i-1},T_i=t_i']| \leq c_i.$$
Then $\Pee(|X-E[X]|>t) \leq 2e^{-\frac{t^2}{2\sum_{i=1}^nc_i^2}}$.
\end{theorem}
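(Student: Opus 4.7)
The plan is to run the classical Chernoff-method argument through a Doob martingale. First I would introduce $Y_i := \E[X \mid T_1, \ldots, T_i]$ for $i = 0, 1, \ldots, n$, so that $Y_0 = \E[X]$, $Y_n = X$, and the martingale differences $D_i := Y_i - Y_{i-1}$ telescope to $\sum_{i=1}^n D_i = X - \E[X]$. The bounded-difference hypothesis is tailor-made for this construction: conditionally on $T_1 = t_1, \ldots, T_{i-1} = t_{i-1}$, the function $t_i \mapsto \E[X \mid T_1 = t_1, \ldots, T_i = t_i]$ has range at most $c_i$, and since $Y_{i-1}$ equals the conditional average of this function over $T_i$, we deduce $|D_i| \le c_i$ almost surely. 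The tower property of conditional expectation also gives $\E[D_i \mid T_1, \ldots, T_{i-1}] = 0$.

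Next I would prove the one-step conditional moment-generating-function bound (Hoeffding's lemma): for every $\lambda > 0$ and every $i$,
$$\E\!\left[e^{\lambda D_i} \,\middle|\, T_1, \ldots, T_{i-1}\right] \le e^{\lambda^2 c_i^2 / 2}.$$
To get this, I would use convexity of $z \mapsto e^{\lambda z}$ on $[-c_i, c_i]$ to bound $e^{\lambda z} \le \frac{c_i - z}{2 c_i} e^{-\lambda c_i} + \frac{c_i + z}{2 c_i} e^{\lambda c_i}$, then take conditional expectation and apply $\E[D_i \mid \cdot] = 0$ to arrive at the hyperbolic-cosine estimate $\cosh(\lambda c_i)$. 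Comparing Taylor series term by term yields $\cosh(x) \le e^{x^2/2}$, because $(2k)! \ge 2^k k!$ for every $k \ge 0$. This is the only step in the entire argument that is more than formal manipulation, and it is where I expect the (small) main obstacle to lie.

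The remainder is bookkeeping. Iteratively conditioning on $T_1, \ldots, T_{n-1}$, then on $T_1, \ldots, T_{n-2}$, and so on down to the empty history, and invoking the tower property together with the one-step MGF bound at each stage, yields
$$\E\!\left[e^{\lambda (X - \E[X])}\right] \le e^{\lambda^2 \sum_{i=1}^n c_i^2 / 2}.$$
Applying Markov's inequality to the nonnegative random variable $e^{\lambda(X - \E[X])}$ gives $\Pee(X - \E[X] > t) \le e^{-\lambda t + \lambda^2 \sum_i c_i^2 / 2}$, and optimizing by setting $\lambda = t / \sum_i c_i^2$ produces the upper-tail bound $\Pee(X - \E[X] > t) \le e^{-t^2 / (2 \sum_i c_i^2)}$. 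Repeating the same argument with $-X$ in place of $X$ controls the lower tail, and a union bound combines the two one-sided estimates to yield the stated two-sided inequality.
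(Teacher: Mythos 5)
Your proof is correct: the Doob martingale $Y_i=\E[X\mid T_1,\ldots,T_i]$, the observation that the hypothesis bounds the oscillation of $t_i\mapsto\E[X\mid T_1=t_1,\ldots,T_i=t_i]$ so that $|Y_i-Y_{i-1}|\le c_i$, the conditional Hoeffding bound $\E[e^{\lambda D_i}\mid T_1,\ldots,T_{i-1}]\le\cosh(\lambda c_i)\le e^{\lambda^2c_i^2/2}$, and the Markov--optimization step all go through and yield exactly the stated constant $2\sum_{i=1}^nc_i^2$. The paper does not prove this statement but merely cites Azuma's original result, and your argument is precisely the standard proof that reference supplies, so there is nothing further to compare.
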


The following is an extension of a result of Talagrand due to McDiarmid,
it is the main result of \cite{mc}:

\begin{theorem}[\cite{mc}]
\label{concentrationtheorem}
Suppose that $X$ is a nonnegative valued random variable on the elements of a product space each component of 
which is a uniformly  random permutation on a finite set and that there are $c,r$ such that swapping the position of any two elements 
in a permutation can change the value of $X$ by at most $c$, and that to certify that $X$ is at least $s$ 
we need only specify the elements in  $rs$ positions where these positions can be in any of the permutations. 
Then
\begin{enumerate}
    \item  for all positive $t$, $\Pee(X>{\rm Med}(X)+t) \le 2e^{\frac{-t^2}{4rc^2({\rm Med}(x)+t)}}$, and  
    \item  for all positive $t$, $\Pee(X<{\rm Med}(X)-t) \le 2e^{\frac{-t^2}{4rc^2{\rm Med}(X)}}$. 
\end{enumerate}
\end{theorem}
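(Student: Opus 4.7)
The plan is to follow McDiarmid's adaptation of Talagrand's method to permutation spaces: first prove a convex-distance moment inequality, then convert it to the two tail bounds. Write a sample point as $\omega = (\sigma^{(1)}, \ldots, \sigma^{(k)})$ with each $\sigma^{(j)}$ a uniform random permutation. For an event $A$ and a unit vector $\alpha$ with nonnegative coordinates indexed by all positions of all the $\sigma^{(j)}$, set
\[
\rho(\omega, A) \;=\; \sup_{\alpha}\, \inf_{\omega' \in A}\, \sum_{i \,:\, \omega_i \neq \omega'_i} \alpha_i.
\]
The central target is the moment inequality $\E[\exp(\rho(\omega, A)^2/4)] \leq 1/\Pee(A)$, a permutation analogue of Talagrand's convex distance inequality for product spaces.

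First, I would establish this moment inequality. For a single permutation I would induct on the size of the ground set: expose $\sigma(1)$, couple the conditional distribution with a second copy that differs from $\sigma$ only at position $1$, and absorb the cost of the forced compensating change--re-randomising one value of a permutation inevitably moves a second value--into an extra unit of distance at that coordinate. The delicate part is preserving the constant $4$ in the exponent through the induction, since the classical Talagrand argument exploits freely re-randomising a single independent coordinate and this fails verbatim for permutations; controlling this extra coupling cost without losing more than the factor $4$ is the \emph{main obstacle}. For the product $\omega$ the single-permutation bound multiplies over $j = 1, \ldots, k$ by independence, giving the stated inequality in the general setting.

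Next, I would translate the hypotheses into a concrete lower bound on $\rho(\omega, A)$. A telescoping swap argument turns the Lipschitz hypothesis into $|X(\omega) - X(\omega')| \leq c\,|\{i : \omega_i \neq \omega'_i\}|$, and the certificate hypothesis says that whenever $X(\omega) \geq s$ there is a set $T(\omega)$ of at most $rs$ positions on which $\omega$ already certifies $X \geq s$. Testing $\rho(\omega, A)$ against the unit vector supported uniformly on $T(\omega)$ then shows that if $X(\omega) \geq s$ and $\omega' \in A \subseteq \{X < s'\}$, the two outcomes must differ on at least $(s - s')/c$ positions of $T(\omega)$, so $\rho(\omega, A) \geq (s - s')/(c\sqrt{rs})$. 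Finally, Markov's inequality applied to the moment bound with this geometric lower bound yields the tail estimates: for the upper tail take $A = \{X \leq {\rm Med}(X)\}$ (so $\Pee(A) \geq 1/2$), $s = {\rm Med}(X)+t$, $s' = {\rm Med}(X)$; for the lower tail, symmetrically take $A = \{X \geq {\rm Med}(X)\}$, $s = {\rm Med}(X)$, $s' = {\rm Med}(X)-t$, with certificate size bounded by $r \cdot {\rm Med}(X)$. Rearranging produces precisely the two inequalities in the statement.
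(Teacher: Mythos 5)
The paper does not prove this theorem at all: it is imported verbatim (up to rescaling the Lipschitz constant, as the authors note after the statement) from McDiarmid's paper \cite{mc}, so there is no in-paper argument to compare yours against. Judged on its own terms, your outline reproduces the correct architecture of McDiarmid's actual proof --- define the convex distance $\rho(\omega,A)$ on the product of permutations, prove the moment inequality $\E[e^{\rho^2/4}]\le 1/\Pee(A)$, then convert Lipschitz-plus-certificate hypotheses into a lower bound on $\rho$ and finish with Markov and $\Pee(X\le \mathrm{Med}(X))\ge 1/2$. The second half (the reduction from the moment inequality to the two tail bounds) is standard and you have it essentially right, apart from a small mix-up in the lower-tail case: you should take $A=\{X< \mathrm{Med}(X)-t\}$ and let the certificate live on points $\omega$ with $X(\omega)\ge \mathrm{Med}(X)$ (certificate size $\le r\,\mathrm{Med}(X)$), using $\Pee(X\ge \mathrm{Med}(X))\ge 1/2$; as written, your choice $A=\{X\ge \mathrm{Med}(X)\}$ is inconsistent with your own template ``$\omega'\in A\subseteq\{X<s'\}$'', since $\alpha$ must be fixed before $\omega'$ is chosen and so cannot be supported on a certificate of $\omega'$.

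The genuine gap is that the entire technical content of the theorem --- the convex-distance moment inequality for a uniformly random permutation, and its extension to a product of independent permutations --- is only named, not proved. You correctly identify why the product-space argument fails verbatim (re-randomising one value of a permutation forces a compensating change elsewhere, so coordinates are not independent, and the coupling cost threatens the constant in the exponent), but you do not resolve it; this inductive coupling argument is precisely what occupies McDiarmid's paper (building on Talagrand's treatment of the symmetric group). Also, the step ``the single-permutation bound multiplies over $j=1,\dots,k$ by independence'' is not automatic: the convex distance on the product space is not a simple function of the per-factor convex distances, so passing from one permutation to a product of permutations requires its own induction over the factors, not just a product of expectations. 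As it stands, the proposal is a correct road map with the destination's hardest leg left untravelled; for the purposes of this paper that is immaterial, since the result is used as a black box with a citation.
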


We note that we can and do think of a choice of an element from a finite set as a 
permutation of the set where we choose the first element of the permutation. 
We note further that McDiarmid stated the lemma with the hypothesis  that swapping two elements 
changed the outcome by at most $2c$ and hence had a $16$ where we have a four.

This inequality bounds concentration around the median not the mean, so we need to show that these two are close.
We use the following which is Lemma 4.6 in  \cite{MC}  applied to both $X$ and $-X$.

\begin{lemma}
    If the hypothesis of Theorem \ref{concentrationtheorem} hold for a random variable $X$ then 
    $|\E[X]-{\rm Med}(X)|<6c\sqrt{r{\rm Med}(X)}+16c^2r$.
\end{lemma}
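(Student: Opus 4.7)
The plan is to use the standard tail-integral representation of expectation in combination with the two tail bounds of Theorem \ref{concentrationtheorem}. Since $X$ is nonnegative, writing $X-{\rm Med}(X)=\max(X-{\rm Med}(X),0)-\max({\rm Med}(X)-X,0)$ and integrating yields
$$\E[X]-{\rm Med}(X)=\int_0^{\infty}\Pee(X>{\rm Med}(X)+t)\,dt-\int_0^{{\rm Med}(X)}\Pee(X<{\rm Med}(X)-t)\,dt,$$
so it suffices to bound each of the two integrals on the right by $6c\sqrt{r{\rm Med}(X)}+16c^2r$; that gives both directions of the desired inequality at once.

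The lower-tail integral is the easy one. Part (2) of Theorem \ref{concentrationtheorem} supplies a Gaussian bound whose denominator $4rc^2{\rm Med}(X)$ does not depend on $t$, so extending the range of integration to $[0,\infty)$ and computing the resulting Gaussian integral gives
$$\int_0^{{\rm Med}(X)}\Pee(X<{\rm Med}(X)-t)\,dt\leq \int_0^{\infty}2\exp\!\left(-\frac{t^2}{4rc^2{\rm Med}(X)}\right)dt=2\sqrt{\pi rc^2{\rm Med}(X)}<4c\sqrt{r{\rm Med}(X)}.$$

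The main obstacle is the upper-tail integral, because the denominator ${\rm Med}(X)+t$ appearing in part (1) of Theorem \ref{concentrationtheorem} depends on $t$, so the tail is Gaussian-like for $t\ll {\rm Med}(X)$ but only exponential for $t\gg {\rm Med}(X)$. I would handle this by splitting the integral at $t={\rm Med}(X)$. On $[0,{\rm Med}(X)]$ one has $\frac{t^2}{4rc^2({\rm Med}(X)+t)}\geq \frac{t^2}{8rc^2{\rm Med}(X)}$, so the integrand is dominated by a Gaussian with total mass at most $2\sqrt{2\pi rc^2{\rm Med}(X)}<6c\sqrt{r{\rm Med}(X)}$. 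On $[{\rm Med}(X),\infty)$ one has $\frac{t^2}{4rc^2({\rm Med}(X)+t)}\geq \frac{t}{8rc^2}$, so the integrand is dominated by $2e^{-t/(8rc^2)}$, whose integral from ${\rm Med}(X)$ to $\infty$ is $16rc^2 e^{-{\rm Med}(X)/(8rc^2)}\leq 16c^2r$.

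Adding the Gaussian and exponential contributions from the upper-tail split yields the required bound, and combining with the lower-tail estimate gives the two-sided inequality. Beyond the splitting step, everything reduces to elementary Gaussian and exponential integral calculations, so the real content is the observation that ${\rm Med}(X)+t$ is comparable to ${\rm Med}(X)$ in one regime and to $t$ in the other.
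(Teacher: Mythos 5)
The paper does not prove this lemma itself---it simply invokes Lemma~4.6 of McDiarmid's survey \cite{MC} applied to $X$ and $-X$---so there is no in-paper argument to compare against; your derivation is a self-contained replacement, and it is correct. The tail-integral identity
$$\E[X]-{\rm Med}(X)=\int_0^\infty\Pee(X>{\rm Med}(X)+t)\,dt-\int_0^{{\rm Med}(X)}\Pee(X<{\rm Med}(X)-t)\,dt$$
holds (nonnegativity of $X$ lets you truncate the second integral at ${\rm Med}(X)$), and since both integrals are nonnegative, bounding each by $6c\sqrt{r\,{\rm Med}(X)}+16c^2r$ does give the two-sided estimate. The lower-tail integral is at most $\sqrt{4\pi r c^2 {\rm Med}(X)} = 2\sqrt{\pi}\,c\sqrt{r\,{\rm Med}(X)} < 4c\sqrt{r\,{\rm Med}(X)}$, as you say. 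For the upper tail, the split at $t={\rm Med}(X)$ with the comparisons ${\rm Med}(X)+t\le 2{\rm Med}(X)$ on $[0,{\rm Med}(X)]$ and ${\rm Med}(X)+t\le 2t$ on $[{\rm Med}(X),\infty)$ is exactly the right move; it yields $\sqrt{8\pi rc^2{\rm Med}(X)} = 2\sqrt{2\pi}\,c\sqrt{r\,{\rm Med}(X)} < 6c\sqrt{r\,{\rm Med}(X)}$ plus $16c^2r\,e^{-{\rm Med}(X)/(8rc^2)}\le 16c^2r$, matching the claimed constants. Since the paper elides this reasoning entirely, your argument is a clean, arguably more transparent way to recover the bound than chasing the cited lemma and its application to $-X$.
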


Since $\E[X] \ge \frac{{\rm Med}(X)}{2}$ for any nonnegative random variable $X$, we obtain the following corollary.

\begin{corollary}
\label{permutation}
    If the hypotheses of Theorem \ref{concentrationtheorem} hold, then for all  $t>0$, $$\Pee(|X-\E[X]|>t+ 6c\sqrt{2r\E[X]}+16c^2r) \le 4e^{\frac{-t^2}{8rc^2(\E[X]+t)}}.$$ 
\end{corollary}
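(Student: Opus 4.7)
The plan is to chain the preceding lemma (which compares $\E[X]$ and ${\rm Med}(X)$) with Theorem \ref{concentrationtheorem} (which gives concentration around the median), and then translate everything into an inequality phrased in terms of the mean.

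First, since $X$ is nonnegative we have $\E[X]\ge {\rm Med}(X)/2$, i.e.\ ${\rm Med}(X)\le 2\E[X]$. Substituting this into the preceding lemma yields
$$|\E[X]-{\rm Med}(X)|<6c\sqrt{r\,{\rm Med}(X)}+16c^2r\le 6c\sqrt{2r\E[X]}+16c^2r.$$
Hence, by the triangle inequality, the event $|X-\E[X]|>t+6c\sqrt{2r\E[X]}+16c^2r$ is contained in the event $|X-{\rm Med}(X)|>t$, so it suffices to bound the probability of the latter.

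Next, I would apply both parts of Theorem \ref{concentrationtheorem} together with a union bound to obtain
$$\Pee(|X-{\rm Med}(X)|>t)\le 2e^{-t^2/(4rc^2({\rm Med}(X)+t))}+2e^{-t^2/(4rc^2{\rm Med}(X))}\le 4e^{-t^2/(4rc^2({\rm Med}(X)+t))}.$$
Finally, I would invoke ${\rm Med}(X)\le 2\E[X]$ a second time: $4rc^2({\rm Med}(X)+t)\le 4rc^2(2\E[X]+t)\le 8rc^2(\E[X]+t)$, which weakens the exponent to $-t^2/(8rc^2(\E[X]+t))$ and gives the stated conclusion.

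There is no real obstacle — the corollary is essentially a bookkeeping exercise combining the mean-median comparison lemma with the two-sided tail bound of Theorem \ref{concentrationtheorem}. The only point worth a moment's care is to apply ${\rm Med}(X)\le 2\E[X]$ in both places where it is needed: once to bound the additive median-to-mean correction, and once to absorb the median in the denominator of the exponent into a quantity involving $\E[X]+t$.
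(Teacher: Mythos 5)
Your proof is correct and follows exactly the route the paper intends (the paper states the corollary after noting $\E[X]\ge{\rm Med}(X)/2$ and does not spell out the details): first translate the median-to-mean gap via the preceding lemma and ${\rm Med}(X)\le 2\E[X]$, reduce to bounding $\Pee(|X-{\rm Med}(X)|>t)$, apply both tails of Theorem~\ref{concentrationtheorem} with a union bound, and weaken the exponent using ${\rm Med}(X)\le 2\E[X]$ once more. All the individual inequalities you use check out, including the step $4rc^2(2\E[X]+t)\le 8rc^2(\E[X]+t)$ which relies on $t>0$.
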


We refer to this inequality as McDiarmid's Permutations Inequality. 

In this paper, we denote the set of neighbours of a vertex $v$ in a graph $G$ by $N_G(v)$.
We use $N(v)$ when there is no danger of confusion regarding  the host graph $G$.

\section{Proof of Theorem \ref{maintheorem}} \label{sec:proofMain}
We shall prove Theorem \ref{maintheorem} in this section.

We will consider the high degree and low degree vertices separately, using different sets of colours on each.
We set $L=\{v \in V(G): \deg(v) \le \Delta^{1/3}\}$ and $H=V(G)-L$ and apply the following.

\begin{lemma}
\label{Ltheorem}
    There is a proper conflict-free colouring of $G[L]$ using at most $2\Delta^{1/3} + 1$ colours such that for every vertex $v$ of $G$ with at least one neighbour in $L$ there is some $w \in N(v) \cap L$ whose colour appears on no other element of $N(v)$. 
\end{lemma}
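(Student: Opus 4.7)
The plan is to use two disjoint palettes: a \emph{base palette} $P_B$ of $\Delta^{1/3}+1$ colours and a \emph{marker palette} $P_M$ of $\Delta^{1/3}$ colours. First, since $G[L]$ has maximum degree at most $\Delta^{1/3}$, it can be properly coloured using $P_B$; for the argument below we will in fact choose this base colouring randomly within $P_B$ and enforce properness via the Lov\'{a}sz Local Lemma. Next, we randomly override some vertices' base colours with marker colours: each $v\in L$ is independently included in a marker set $M$ with some probability, and each marked vertex receives a uniformly random colour from $P_M$. We unmark one endpoint of each monochromatic edge among markers to preserve properness; because $P_B$ and $P_M$ are disjoint, no conflict can arise between a marker and a base-coloured vertex, and a vertex with a marker colour unique inside $M$ is automatically unique in the whole of $N(v)\cap L$.

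For each $v\in V(G)$ with $d_v:=|N(v)\cap L|\ge 1$, we need some $w\in N(v)\cap L$ whose colour appears on no other vertex of $N(v)\cap L$. When $d_v=1$ this holds trivially. For large $d_v$ the marker mechanism provides the witness: $|N(v)\cap L\cap M|$ is binomial with mean $p\,d_v$, and when $p\,d_v$ is close to $|P_M|$ a uniform random assignment of marker colours leaves some colour appearing exactly once in $N(v)\cap L\cap M$ with high probability. For small $d_v$ the randomized base colouring plays this role: its expected number of singleton base colours inside $N(v)\cap L$ is $d_v(1-1/|P_B|)^{d_v-1}$, which is $\Omega(d_v/e)$ whenever $d_v\le |P_B|$, and standard concentration shows a uniquely coloured witness exists with overwhelming probability.

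The main obstacle is that no single marking probability $p$ covers the full range $1\le d_v\le \Delta$: choosing $p\approx \Delta^{-2/3}$ is ideal when $d_v\approx \Delta$ but produces essentially no markers when $d_v\approx \Delta^{1/2}$, while the base-palette mechanism's expected singleton count $d_v\,e^{-d_v/|P_B|}$ drops below $1$ for $d_v$ much larger than $\Delta^{1/3}\log\Delta$. To close this intermediate gap we split $P_M$ into $\ell=\Theta(\log\Delta)$ disjoint sub-palettes $P_M^{(1)},\dots,P_M^{(\ell)}$ of size roughly $\Delta^{1/3}/\log\Delta$ each, and run $\ell$ independent marker rounds with geometrically decreasing probabilities $p_i=2^{-i}$. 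For every $v$ with $d_v$ above a small constant threshold, some level $i_v$ satisfies $p_{i_v}d_v\in \Theta(|P_M^{(i_v)}|)$ and produces a unique witness drawn from $P_M^{(i_v)}$ with probability $1-\Delta^{-\Omega(1)}$. All bad events (proper violations on edges of $G[L]$, witness failures at each $v$, and the concentration deviations used in their analysis) have probability at most $\Delta^{-\Omega(1)}$ and depend only on the base and marker trials of $L$-vertices within distance two of the affected vertex, giving a dependency degree of at most $\Delta^{O(1)}$. The Lov\'{a}sz Local Lemma then delivers the desired colouring.
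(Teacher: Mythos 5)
The paper's proof of this lemma is a short, deterministic greedy argument: order the vertices of $L$, and for each $u$ with a neighbour in $L$ designate the earliest-indexed $L$-neighbour $v_{\ell(u)}$ of $u$ as its canonical witness; when colouring $v_i$, avoid both the colours of its already-coloured $L$-neighbours (at most $\Delta^{1/3}$) and the colours $f(v_{\ell(u)})$ for neighbours $u$ of $v_i$ with $\ell(u)<i$ (again at most $\Delta^{1/3}$, since $\deg(v_i)\le\Delta^{1/3}$). This uses $2\Delta^{1/3}+1$ colours, is proper, and protects each canonical witness by construction. Your probabilistic multi-palette approach is not just overkill; it has concrete gaps.

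First, you cannot ``enforce properness via the Lov\'asz Local Lemma'' on a uniformly random colouring of $G[L]$ from only $\Delta^{1/3}+1$ colours: a monochromatic edge has probability about $\Delta^{-1/3}$, each edge shares a vertex with up to $2\Delta^{1/3}$ other edges, and $4\cdot\Delta^{-1/3}\cdot 2\Delta^{1/3}\approx 8>1$, so the symmetric LLL condition fails. (Random colouring plus LLL needs roughly $4\Delta(G[L])$ colours, not $\Delta(G[L])+1$.) Second, for small $d_v$ your base-palette witness argument does not have the claimed ``overwhelming'' failure probability: for $d_v=2$ non-adjacent $L$-neighbours, the probability that the base colouring assigns them the same colour is about $\Delta^{-1/3}$, which is far too large to survive the LLL union over the $\Delta^{\Theta(1)}$ nearby bad events. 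Third, the ``unmark one endpoint of each monochromatic marker edge'' step is not innocuous: the vertex you would like to be the unique marker in $N(v)$ may be unmarked because of a conflict elsewhere, reverting to its base colour and losing the witness property; you analyse the marker rounds as if the resulting colouring were the raw random assignment, but it is not, and the dependence introduced by the repair step is nowhere handled. In short, a constructive greedy with an explicitly protected witness accomplishes the task cleanly; the probabilistic machinery you set up both fails to produce a proper colouring in the stated palette and leaves the small-$d_v$ and repair-interaction cases open.
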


\begin{proof}
Denote the vertices in $L$ by $v_1,v_2,...,v_{|L|}$.
We color those vertices by a greedy algorithm.
For every vertex $u \in V(G)$ with $N(u) \cap L \neq \emptyset$, let $\ell(u) = \min\{i \in [|L|]: v_i \in N(u)\}$. 
For $i=1,2,...,|L|$, let $f(v_i)$ be a colour that is not in $\{f(v_j): 1 \leq j \leq i-1, v_j \in N(v_i)\} \cup \{f(v_{\ell(u)}): uv_i \in E(G), \ell(u) \neq v_i\}$.
Since every vertex in $L$ has degree in $G$ at most $\Delta^{1/3}$, at most $2\Delta^{1/3}$ colours are avoided when defining $f(v_i)$.
So the greedy colouring only uses at most $2\Delta^{1/3}+1$ colours.
Moreover, the colouring is clearly proper, and for every vertex $u \in V(G)$ with $N(u) \cap L \neq \emptyset$, $v_{\ell(u)}$ is the only vertex in $N(u) \cap L$ uses the colour $f(v_{\ell(u)})$.
\end{proof}

At this point we simply need to find a proper colouring of $G[H]$ using at most $\Delta+100105\Delta^{2/3}\log\Delta+1$ colours distinct from the colours used in Lemma \ref{Ltheorem} so that for every vertex with all its neighbours in $H$, some neighbour receives a colour used nowhere else in the neighbourhood. 

We next colour a subset $Z$ of $H$ with $\lceil 10^5\Delta^{2/3}\log\Delta \rceil \leq 10^5\Delta^{2/3}\log\Delta+1$ colours so that for every vertex $v$ of $H$ with all its neighbours in $H$ there exists $w \in N(v) \cap Z$ coloured with a colour not used on $N(v)-\{w\}$. 
We further insist that no vertex of $L$ has a completely coloured neighbourhood at the end of this step unless it also satisfies this property. 

\begin{lemma}
\label{Mtheorem}
There is a proper colouring $f$ of $G[Z]$ for some subset $Z$ of $H$ using at most $\lceil 10^5 \Delta^{2/3} \log\Delta \rceil$ colours such that for every vertex $v \in H$ with $N(v) \cap L=\emptyset$ and every vertex $v \in L$ with $N(v) \subseteq f(Z)$, there is some $w \in N(v) \cap Z$ whose colour appears on no other element of $N(v)$.
\end{lemma}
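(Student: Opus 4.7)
The plan is to prove this lemma by the probabilistic method, applying the Lov\'asz Local Lemma (Theorem~\ref{lll}). Fix $k = \lceil 10^5\Delta^{2/3}\log\Delta\rceil$ and $p = c\log\Delta/\Delta^{1/3}$ for a large constant $c$ with $c \ll 10^5$. Independently, each $v \in H$ is placed in $Z_0$ with probability $p$, and each $v \in Z_0$ is assigned a colour uniformly at random from $[k]$. If the resulting colouring of $G[Z_0]$ is not proper, I repair it by uncolouring one endpoint of each monochromatic edge; let $Z\subseteq Z_0$ be the vertices that survive and let $f$ be the induced colouring on $Z$.

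The bad events to avoid are the following. For each $v \in H$ with $N(v)\cap L = \emptyset$, let $A_v$ be the event that no colour appears exactly once on $N(v)\cap Z$. For each $v \in L$ with $N(v)\subseteq H$, let $B_v$ be the event that $N(v)\subseteq Z$ yet no colour appears exactly once on $N(v)$. Properness is already enforced by the repair step, so it is not itself a bad event. For $A_v$ with $d=\deg(v)\ge \Delta^{1/3}$, the number $M$ of coloured neighbours of $v$ in $Z_0$ is $\Bin(d,p)$ with $\E[M] = dp\ge c\log\Delta$, so by Chernoff (Theorem~\ref{chernoff}) the event $M < dp/2$ occurs with probability at most $e^{-dp/8}\le \Delta^{-c/8}$. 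Given $M\ge dp/2$, the probability of no singleton colour on $N(v)\cap Z_0$ can be bounded by $(eM/(2k))^{M/2}$ via a direct count of partitions with no part of size $1$, which is super-polynomially small because $M/k = O(c/10^5)$ is tiny. The repair step removes a would-be witness with probability at most $O(\Delta p/k) = O(c/10^5)$ per candidate, which is negligible, so $\Pee(A_v)\le \Delta^{-c/8}$. For $B_v$, writing $d'=\deg(v)$, the probability is at most $p^{d'}$ times the probability that $d'$ i.i.d.\ uniform colours in $[k]$ produce no singleton; the worst case $d'=2$ gives $\Pee(B_v)\le p^2/k = O(\log\Delta/\Delta^{4/3})$, and larger $d'$ gives much smaller probabilities.

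Each $A_v$ and $B_v$ depends on random trials only at vertices in $N(v)\cup N(N(v))$ (the second neighbourhood appears through the repair step), so the dependency degree is at most $O(\Delta^2)$ for $A_v$ and at most $O(\Delta)$ for $B_v$ with $d'=2$. With the estimates above, I verify the asymmetric Lov\'asz Local Lemma by assigning each event $E$ a weight $x(E)$ slightly above $\Pee(E)$ and checking that $\prod_{B\sim E}(1-x(B))\ge 1/2$; this reduces to verifying that $\sum_{B\sim E} x(B) = o(1)$ as $\Delta\to\infty$ for both types of events, which follows directly from the dependency-degree bounds and the bounds on $\Pee(A_v)$ and $\Pee(B_v)$ above.

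\textbf{Main obstacle.} The delicate step is the concentration estimate on $A_v$ when $\deg(v)$ is close to $\Delta^{1/3}$. There $\E[M]$ is only $\Theta(\log\Delta)$, and Azuma-type concentration with $d\approx\Delta^{1/3}$ trials would yield only $\Delta^{-O((\log\Delta)/\Delta^{1/3})}$, far from the $\Delta^{-C}$ required to overwhelm the $O(\Delta^2)$ dependency degree of $A_v$. The plan is to split the analysis into a Chernoff bound on $M$ (which is a sum of independent Bernoulli variables) and a separate direct estimate of the probability that $M$ uniform colours give no singleton given $M\ge dp/2$, with additional bookkeeping to show the repair step hardly affects the expected number of witnesses. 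Choosing $c$ large enough that $\Delta^{-c/8}$ beats the $O(\Delta^2)$ dependency degree, while small enough that $dp/k$ and $\Delta p/k$ remain negligible, is the balancing act that drives the constants in the lemma.
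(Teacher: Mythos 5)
Your approach is structurally different from the paper's, and it contains a gap that I do not think can be patched without importing the paper's key idea.

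The fatal issue is the events $B_v$ for low-degree vertices $v \in L$. Take $\deg(v)=2$, say $N(v)=\{w_1,w_2\}$ with $w_1w_2\notin E(G)$. Then, as you compute, $\Pee(B_v)\le p^2/k = \Theta(\log\Delta/\Delta^{4/3})$, and this is essentially tight. Now consider the local lemma condition from the point of view of some other event $E$ (say an $A_u$). The events $B_{v'}$ that $E$ depends on are, roughly, all those indexed by vertices $v'$ within distance four of $u$; there can be $\Theta(\Delta^4)$ degree-two vertices in that ball. Since the asymmetric local lemma forces $x(B_{v'})\ge\Pee(B_{v'})$, the quantity $\sum_{B_{v'}\sim E}x(B_{v'})$ is bounded below by something of order $\Delta^4\cdot\log\Delta/\Delta^{4/3}=\Delta^{8/3}\log\Delta$, which is enormous rather than $o(1)$. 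No choice of weights rescues this; $\Pee(B_v)$ is simply too large relative to the inevitable polynomial-in-$\Delta$ dependency degree, and neither shrinking $p$ (which kills the $A_v$ analysis when $\deg(v)$ is near $\Delta^{1/3}$) nor inflating $k$ (which would require $k\gg\Delta^{10/3}$) is available. A secondary but related error: you claim dependency degree $O(\Delta^2)$ for $A_v$ and $O(\Delta)$ for $B_v$, but those figures are the \emph{support sizes}; because of the repair step, each event depends on the random trials within distance two of its index, so two events are dependent whenever their indices are within distance four, and the relevant dependency degrees are $\Theta(\Delta^4)$. Even with your claimed $O(\Delta^2)$ figure the $B_v$ sum would still be $\Theta(\Delta^{2/3}\log\Delta)$ and the lemma would fail.

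The paper avoids this by making the analogue of $B_v$ \emph{impossible} rather than merely unlikely, via a two-stage construction. It first runs a separate Lov\'asz Local Lemma argument (Lemma~\ref{goodY}) to pick a set $Y\subseteq H$ so that no $v\in L$ with more than $20$ neighbours in $H$ has $N(v)\cap H\subseteq Y$; thus any $v\in L$ whose whole neighbourhood could end up coloured has degree at most $20$. It then colours $Y$ with a rule under which $y$ keeps its colour only if it conflicts with nothing in the enlarged set $N^*(y)$, which contains every vertex sharing a common neighbour of degree at most $20$ with $y$. Consequently, for $v\in L$ of degree at most $20$, any two of its neighbours that survive get \emph{distinct} colours, so the bad event never occurs and no probabilistic bound on it is required. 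If you want to rescue your proposal you would need to (i) preselect $Z_0$ so that small-degree $L$-vertices never have their entire neighbourhood in it unless $\deg(v)$ is tiny, and (ii) make the repair symmetric over such common low-degree neighbours; at that point you have essentially reconstructed the paper's $Y$ and $N^*$ mechanism. The $A_v$ part of your argument (Chernoff on $|N(v)\cap Z_0|$ plus a Stirling-number count for the no-singleton probability) is a reasonable alternative to the paper's Azuma step, but the interaction with the repair step is only sketched and would need to be made precise (you need to control the event that every colour class in $N(v)\cap Z_0$ changes parity, not merely that single candidate witnesses survive).
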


We postpone the proof of Lemma \ref{Mtheorem} to Section \ref{sec:proofMtheorem}.

We now construct a graph $G'$ from $G$ as follows:
    \begin{itemize}
        \item First, delete the vertices coloured in Lemma \ref{Mtheorem} from $H$ to obtain $H'$, and delete from $L$ the vertices which have some neighbour in $L$ or have some neighbour in $H$ coloured with a colour appearing on none of its other neighbours.
	\item Second, for any two vertices in $H'$ with at least $\Delta^{1/3}/\log\Delta$ common neighbours $w$ in $L$ with $|N(w) \cap H'| \leq 50$, we add an edge between them.
        \item Finally, we obtain $L'$ from $L$ by repeatedly doing one of  the following  until it is no longer possible to do so: 
            \begin{itemize}
                \item if there exists a vertex $v$ in $L$ such that $|N(v) \cap H'|=1$ or some vertex in $N(v) \cap H'$ is adjacent to all the other vertices in $N(v) \cap H'$, then delete $v$, and
                \item if there exists a vertex $v$ in $L$ such that some vertex $u$ of $N(v)$ is adjacent to all but exactly one other neighbour $u'$ of $N(v)$, then delete $v$ and add the edge $uu'$. 
            \end{itemize}
    \end{itemize}

Note that $\Delta(G'[H']) \leq \Delta(G[H]) + \frac{50\Delta}{\Delta^{1/3}/\log \Delta} \leq \Delta + 50\Delta^{2/3}\log \Delta$.
Moreover, for every $v \in L'$, $|N_{G'}(v) \cap H'| \geq 3$, and every vertex in $N_{G'}(v) \cap H'$ non-adjacent in $G'$ to at least two other vertices in $N_{G'}(v) \cap H'$.  
Furthermore, for every $v \in L-L'$ with $N(v) \subseteq H$, either some vertex in $N_G(v) \cap H$ has been coloured by a colour not used by other vertices in $N_G(v) \cap H$, or some vertex in $N_{G'}(v) \cap H'$ is adjacent in $G'$ to all other vertices in $N_{G'}(v) \cap H'$.

So, given our  colouring of $L$, to prove the theorem, it suffices to find a proper colouring of $G'[H']$ with at most $\Delta+105\Delta^{\frac{2}{3}}\log\Delta$ colours distinct from the used colours so that for every  vertex $v$ of $L'$ some colour is assigned to exactly one vertex in $N_{G'}(v) \cap H'$. 

Our next step is to partition $H'$ into $t= \lceil 18\Delta^{1/3} \rceil$ subsets which we will colour using disjoint sets of colours. 

We say a vertex $u$ of $H'$ is {\it nearby} a vertex $v$ of $H'$ if $u$ and $v$ have at least 100 common neighbours $w$ in $L'$ with $|N_{G'}(w)| \leq 50$.
For a partition $\{H_i: i \in [t]\}$ of $H'$, we say that a vertex $w \in L'$ is {\it dangerous} (with respect to $\{H_i: i \in [t]\}$) if  for every $u$ in $N(w)$ there exists $i \in [t]$ such that $u \in H_i$ is non-adjacent in $G'$ to a vertex $u'$ in $N_{G'}(w) \cap H_i-\{u\}$ to which it is not nearby, and there exists $u'' \in N_{G'}(w)-(N_{G'}(u) \cup \{u,u'\})$ such that $u'' \in H_j$ for some $j \in [t]$ with $|H_j \cap N_{G'}(w)| \geq 2$. 

\begin{lemma}
\label{partitiontheorem_simple}
We can partition $H'$ into $t= \lceil 18\Delta^{1/3} \rceil$ sets $H_1,H_2,...,H_t$ such that for each $i \in [t]$,
 \begin{enumerate}
     \item for every vertex $v$ in $L'$ with at least $50$ neighbours in $H'$, there exists $i_v \in [t]$ such that $|N_{G'}(v) \cap H_{i_v}|=1$,
     \item for every vertex $v$ of $H_i$,
     \begin{enumerate}
         \item  $v$ has degree in $H_i$ of at most $\frac{\Delta+50\Delta^{2/3}\log \Delta-|N_{G'}(v) \cap L'|}{t}+\Delta^{1/3}\log \Delta$, 
         \item  the number of vertices in  $H_i$ which are nearby $v$  is at most  $\frac{|N_{G'}(v)\cap L'|}{2t}+\Delta^{1/3}\log \Delta$,
         \item  $$\sum_w|(N_{G'}(w)-N_{G'}(v)) \cap H_i| \leq \Delta^{1/3}\log\Delta$$ where the sum is over all dangerous  $w \in L' \cap N_{G'}(v)$ (with respect to $\{H_j: j \in [t]\}$) with at most $50$ neighbours in $H'$. 
     \end{enumerate}
 \end{enumerate}
\end{lemma}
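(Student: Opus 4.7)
The plan is to assign each vertex of $H'$ independently and uniformly at random to one of the $t$ parts, and then to apply the Lov\'asz Local Lemma (Theorem~\ref{lll}) to a family of bad events corresponding to the four requirements. For each $v \in L'$ with $|N_{G'}(v) \cap H'| \geq 50$ we would introduce a bad event $A_v^{(1)}$ asserting that no part contains exactly one vertex of $N_{G'}(v) \cap H'$, and for each $v \in H'$ we would introduce bad events $A_v^{(2a)}, A_v^{(2b)}, A_v^{(2c)}$ asserting that the corresponding quantity exceeds its stated bound for the part containing $v$.

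For $A_v^{(1)}$ with $d := |N_{G'}(v) \cap H'| \geq 50$, if the event holds then the $d$ placements fall into at most $d/2$ distinct parts, and a union bound over such supports yields $\Pee(A_v^{(1)}) \leq \binom{t}{d/2}(d/(2t))^d \leq (ed/(2t))^{d/2}$. Since $d \leq \Delta^{1/3}$ and $t \geq 18\Delta^{1/3}$, this is polynomially small in $\Delta^{-1}$ with exponent linear in $d$, and so comfortably smaller than the polynomial dependency factor even at the boundary $d = 50$. For $A_v^{(2a)}$ and $A_v^{(2b)}$, conditioning on the part containing $v$ turns the target quantity into a sum of independent $\mathrm{Bernoulli}(1/t)$ indicators whose expectation matches the main term in the stated bound; for (2b), the upper bound $|N_{G'}(v) \cap L'|/2$ on the number of vertices of $H'$ nearby $v$ follows from a standard double-count of common low-$G'$-degree $L'$-witnesses of the nearby relation. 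Chernoff's inequality (Theorem~\ref{chernoff}) then yields $\Pee \leq \exp(-\Omega(\log^2 \Delta))$ in each case.

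The main obstacle is $A_v^{(2c)}$, because the sum ranges over dangerous $w$ and dangerousness depends on the entire partition. Our plan is to use the deterministic inequality $\sum_w |(N_{G'}(w) - N_{G'}(v)) \cap H_{i(v)}| \leq 50\,D_v$, where $D_v$ counts the dangerous $w \in L' \cap N_{G'}(v)$ with $|N_{G'}(w) \cap H'| \leq 50$, thereby reducing the problem to bounding $D_v$. The dangerous condition is genuinely restrictive: it forces $N_{G'}(w) \subseteq H'$ and, via its first clause, that no vertex of $N_{G'}(w) \cap H'$ is alone in its part among $N_{G'}(w) \cap H'$. Since $|N_{G'}(w) \cap H'| \leq 50$, a direct enumeration of the surviving partition types yields $\Pee(w \text{ dangerous}) \leq O(\Delta^{-2/3})$ uniformly in $w$, so $\E[D_v] \leq |N_{G'}(v) \cap L'| \cdot O(\Delta^{-2/3}) = O(\Delta^{1/3})$. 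For the tail bound we would use McDiarmid's permutation inequality (Corollary~\ref{permutation}) applied after regrouping the sum by the affected vertex in $H'$, with the bound forced by $A_v^{(2b)}$ on nearby vertices in the part containing $v$ used to tame the worst-case Lipschitz contributions.

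Finally, each bad event depends on the random part assignments of vertices within $G'$-distance $2$ of its associated vertex, and each vertex of $H'$ participates in at most $O(\Delta^2)$ bad events; so each bad event is mutually independent of all but $O(\Delta^{O(1)})$ others. Since every probability above is either $\Delta^{-\omega(1)}$ or polynomial in $\Delta^{-1}$ with exponent large enough to absorb the dependency, the Lov\'asz Local Lemma hypothesis $4pd<1$ is satisfied uniformly for sufficiently large $\Delta$, and a partition with the required properties exists.
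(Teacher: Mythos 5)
Your overall framework — independent random equipartition into $t$ parts plus the Lov\'asz Local Lemma with one bad event per constraint — is exactly the paper's. Your arguments for events (1), (2a), (2b) are sound; in fact your union-bound argument for (1), bounding $\Pee(A_v^{(1)})$ by $\binom{t}{\lfloor d/2\rfloor}(d/(2t))^d$, is cleaner than the paper's ratio-of-counts argument (the paper estimates $|S_{i+1}|/|S_i|$, where $S_i$ is the set of partitions using exactly $i$ parts on $N_{G'}(v)$). The reduction of (2c) to bounding $D_v$, and the observation that the ``no vertex of $N_{G'}(w)$ alone in its part'' relaxation already gives $\Pee(w\text{ dangerous})=O(\Delta^{-2/3})$ and hence $\E[D_v]=O(\Delta^{1/3})$, are also correct.

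The gap is in the concentration step for (2c). The variable $D_v$ is determined by the part assignments of the vertices $u\in\bigcup_{w\in S_v}N_{G'}(w)$, and the Lipschitz constant in coordinate $u$ is $|\{w\in S_v: u\in N_{G'}(w)\}|$, the number of low-degree common $L'$-neighbours of $u$ and $v$. The added-edges rule in the construction of $G'$ caps this by $\Delta^{1/3}/\log\Delta$, and the ``nearby'' relation caps it by $100$, but \emph{only for $u\notin N_{G'}(v)$}. For $u\in N_{G'}(v)\cap H'$ there is no such bound: $u$ and $v$ can have on the order of $\Delta$ common low-degree $L'$-neighbours, and there can be on the order of $\Delta$ such $u$. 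With Lipschitz constant $\sim\Delta$ on $\sim\Delta$ coordinates, neither Azuma nor McDiarmid's permutation inequality yields anything useful at scale $t\approx\Delta^{1/3}\log\Delta$. Invoking (2b) to ``tame the worst-case Lipschitz contributions'' does not resolve this: (2b) constrains the number of vertices of $H_{i(v)}$ nearby $v$, which is a property of the realized partition rather than a uniform bound on the per-coordinate Lipschitz constants, and in any case it concerns nearby vertices, not the neighbours of $v$ that are the source of the trouble. Conditioning on $\neg A_v^{(2b)}$ inside the LLL would also require re-auditing the dependency structure and still would not bound the offending Lipschitz constants.

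The paper's proof resolves precisely this difficulty with a two-stage exposure that is absent from your proposal. It first exposes $I$, the part assignments of $v$ and of $N_{G'}(v)\cap H'$, thereby freezing all the potentially high-Lipschitz coordinates, and shows $\E[\mu_v\mid I]\le 50^3\Delta^{1/3}$. It then exposes $Q$, the part assignments of the vertices of $\bigcup_{w\in S_v}N_{G'}(w)$ that are nearby but not adjacent to $v$, and proves via a separate Azuma argument (on the auxiliary variable $|W_v|$, using Lipschitz constant at most $\Delta^{1/3}/\log\Delta$ and total variation at most $50\Delta$) that $\E[\mu_v\mid I,Q]\le 50^5\Delta^{1/3}$ except with probability $O(\Delta^{-4})$. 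Only then does it apply McDiarmid's permutation inequality, to the remaining randomness coming from non-nearby non-neighbours, whose Lipschitz constant is uniformly at most $50\cdot 100$ and whose certification parameter is $50$. Without this layered conditioning, the concentration step in your proof does not go through.
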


We shall prove Lemma \ref{partitiontheorem_simple} in Section \ref{sec:proofPartitiontheorem}.

Let $\{H_1,H_2,...,H_t\}$ be the partition of $H'$ obtained by Lemma \ref{partitiontheorem_simple}.
Now we construct $G''$ from $G'$ by, for any $i \in [t]$ and  $v \in H_i$, adding (i)   an edge from 
$v$ to every $u \in H_i$ to which it is nearby and (ii) adding all edges between $v$ and every vertex in $(N_{G'}(w)-N_{G'}(v)) \cap H_i$ for every dangerous $w \in N_{G'}(v) \cap L'$ with at most $50$ neighbours in $H'$. 
Lemma \ref{partitiontheorem_simple} tells us that for each $j \in [t]$, we have 
\begin{align*}
    \Delta(G''[H_j]) \le & \max_{v \in H_j}\lfloor (\frac{\Delta+50\Delta^{2/3}\log \Delta-|N_{G'}(v) \cap L'|}{t}+\Delta^{1/3}\log \Delta) \\
    & \ \ \ \ \ \ \ \ \ \ \ + (\frac{|N_{G'}(v)\cap L'|}{2t}+\Delta^{1/3}\log \Delta) + \Delta^{1/3}\log\Delta \rfloor \\
    \leq & \frac{\Delta+50\Delta^{2/3}\log \Delta}{t}+3\Delta^{1/3}\log \Delta.
\end{align*}

We now properly colour each $G''[H_j]$ with $\Delta(G''[H_j])+1$ colours such that the sets of colours used for different $H_j$'s are disjoint. 
Note that the resulting colouring is a proper colouring of $G''[H']$ (and hence of $G'[H']$) using at most $$t \cdot (\frac{\Delta+50\Delta^{2/3}\log \Delta}{t}+3\Delta^{1/3}\log\Delta +1)\leq \Delta+105\Delta^{2/3}\log\Delta$$ colours.

To finish the proof, it suffices to show that for every $w \in L'$, some colour appears in $N_{G'}(w)$ exactly once.
Suppose to the contrary that there exists $w \in L'$ such that no colour appears in $N_{G'}(w)$ exactly once.

By Lemma \ref{partitiontheorem_simple}, $w$ has at most 50 neighbours in $H'$.
We know that $w$ is not dangerous as otherwise we added edges to make $H_i \cap N_{G'}(w)$ a clique for every $i$ and every colour appearing on $H_i \cap N_{G'}(w)$ appears exactly once. 
We choose  $u \in N_{G'}(w)$ which demonstrates  this  and let $h_u \in [t]$ be the index such that $u \in H_{h_u}$.
We know that $u$ is nonadjacent in $G'$ to some vertex $u'$ of $H_{h_u} \cap N_{G'}(w)$ which received the same 
colour. By our construction of $G''$, $u'$ is not nearby $u$. 
By a property of $G'$, $u$ is non-adjacent in $G'$ to at least two  vertices in $N_{G'}(w) \cap H'-\{u\}$.
So there exists $u'' \in N_{G'}(w) \cap H'-\{u,u'\}$ non-adjacent in $G'$ to $u$.
By our choice of $w$, $u'' \in H_j$ for some $j \in [t]$ with $|H_j \cap N_{G'}(w)| \geq 2$. 
But this implies that $u$ demonstrates that $w$ is dangerous, a contradiction. 

This proves Theorem \ref{maintheorem}.
It remains to prove Lemmas \ref{Mtheorem} and \ref{partitiontheorem_simple}.

\section{Proof of Lemma \ref{Mtheorem}} \label{sec:proofMtheorem}

We prove Lemma \ref{Mtheorem} in this section.

For a vertex $v$ of  $H$, we set
$$N^*(v)= (N(v) \cap H) \cup \{w \in H : \exists u \in N(v) \cap N(w) \text{ with } |N(u)| \le 20\}.$$
We note that $|N^*(v)| \le 20\Delta$ since every vertex in $N^*(v)-(N(v) \cap H)$ shares a common neighbour $u$ of degree at most $20$ with $v$.

We first prove the following lemma.

\begin{lemma} \label{goodY}
There exists a subset $Y$ of $H$ such that 
    \begin{enumerate}
        \item there is no vertex $v$ of $L$ with $|N(v) \cap H|>20$ for which $N(v) \cap H \subseteq Y$, and
        \item for every $v \in H$, $360 \log\Delta \leq |N(v) \cap Y| \leq |N^*(v) \cap Y| \leq 25000\Delta^{2/3}\log\Delta$.
    \end{enumerate}
\end{lemma}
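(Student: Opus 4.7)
I would construct $Y$ by independent random sampling. Include each vertex of $H$ in $Y$ independently with probability $p := 1000 \log \Delta / \Delta^{1/3}$ (any constant $c$ with $c \ge 720$ and $20c \le 25000$ works; I am choosing one to make the constants fit cleanly). We then verify all the desired properties hold simultaneously with positive probability via the Lov\'asz Local Lemma (Theorem \ref{lll}), applied to a family of ``bad'' events.

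The bad events are of three types. \emph{Type A:} for each $v \in L$ with $|N(v) \cap H| > 20$, the event $A_v$ that $N(v)\cap H \subseteq Y$; this has probability at most $p^{21} = \Delta^{-\omega(1)}$. \emph{Type B:} for each $v \in H$, the event $B_v$ that $|N^*(v) \cap Y| > 25000\Delta^{2/3}\log\Delta$; since $|N^*(v)| \le 20\Delta$, we have $\E[|N^*(v) \cap Y|] \le 20\Delta p = 20000\Delta^{2/3}\log\Delta$, and the Chernoff bound (Theorem \ref{chernoff}) yields $\Pee(B_v) = \Delta^{-\omega(1)}$. \emph{Type C:} for each $v \in H$, the event $C_v$ that $|N(v)\cap Y|<360\log\Delta$; provided $|N(v)\cap H| \cdot p$ comfortably exceeds $720\log\Delta$, Chernoff again gives $\Pee(C_v) = \Delta^{-\omega(1)}$.

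Each bad event at $v$ is determined by the sampling outcomes on $N^*(v)$, which sits inside the radius-$2$ ball around $v$ in $G$. Consequently, the event is mutually independent of all but at most $O(\Delta^4)$ others (the bad events whose own defining neighbourhoods meet $N^*(v)$). Since all bad-event probabilities are $\Delta^{-\omega(1)}$, the condition $4pd<1$ of Theorem \ref{lll} is satisfied with a large margin, so with positive probability no bad event occurs and the resulting $Y$ has both required properties.

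The delicate point will be the lower bound in condition 2 for vertices $v \in H$ whose neighbourhood in $H$ is small: if $|N(v)\cap H|$ is much smaller than $\Delta^{1/3}$, then $\E[|N(v)\cap Y|]$ is smaller than $720\log\Delta$ and the Chernoff argument for $C_v$ breaks down. I expect the proof to dispose of this by observing that such a vertex $v$ must have many neighbours in $L$, so that either the lemma's requirement on $v$ can be waived (it will be handled by the greedy colouring of $L$ via Lemma \ref{Ltheorem}) or the threshold ``$v \in H$'' is effectively restricted by the later use of $Y$ to those $v$ with $|N(v)\cap H|$ of order at least $\Delta^{1/3}$. The rest of the argument is a routine LLL computation.
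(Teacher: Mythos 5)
Your approach matches the paper's almost exactly: sample each vertex of $H$ into $Y$ independently with probability $p=\Theta(\log\Delta/\Delta^{1/3})$, show each bad event has probability $\Delta^{-\omega(1)}$, and finish with the Lov\'asz Local Lemma using that events depend only on outcomes within distance two of $v$. The only cosmetic difference is that the paper bounds $\Pee(B_v)$ via McDiarmid's permutations inequality (Corollary~\ref{permutation}) rather than the Chernoff bound; for sums of i.i.d.\ Bernoulli variables both apply, and your Chernoff route is if anything the more natural one.

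The ``delicate point'' you flagged is real, and you are right to worry about it. The paper's proof computes $\E[|Y\cap N(v)|]=p\,|N(v)|\ge 800\log\Delta$, but since $Y\subseteq H$ the expectation is actually $p\,|N(v)\cap H|$, and a vertex $v\in H$ can have almost all of $N(v)$ in $L$, making this much smaller than $800\log\Delta$ (indeed $|N(v)\cap Y|$ can be zero). So Statement~2 of Lemma~\ref{goodY}, as literally stated, is not proved and in fact can fail. Your second conjectured resolution is the correct one: when Lemma~\ref{goodY} is invoked inside the proof of Lemma~\ref{Mtheorem}, the lower bound $|N(v)\cap Y|\ge 360\log\Delta$ is used only for vertices $v\in H$ with $N(v)\subseteq H$ (those are the only $v$ for which the events $C_v$ are defined), and for such $v$ one has $|N(v)\cap H|=|N(v)|>\Delta^{1/3}$, so the Chernoff estimate goes through. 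Vertices of $H$ with a neighbour in $L$ are instead served by the colouring produced in Lemma~\ref{Ltheorem}. So the correct fix is to weaken the lemma's hypothesis to ``for every $v\in H$ with $N(v)\subseteq H$'' in the lower-bound clause; with that adjustment your argument (and the paper's) is complete. You left this as a hedge, but the guess is exactly right, and spotting it shows a sharper reading of the statement than the paper's own write-up.
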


\begin{proof}
We place each vertex of $H$ in the set $Y$ independently with probability $p=\frac{800\log \Delta}{\Delta^{1/3}}$. 

For every $v \in L$ with $|N(v) \cap H|>20$, let $A_v$ be the event that $N(v) \cap H \subseteq Y$.
For every $v \in H$, note that $|N(v) \cap Y| \leq |N^*(v) \cap Y|$, and we let $B_v$ be the event that $360 \log\Delta > |N(v) \cap Y|$ or $|N^*(v) \cap Y| > 25000\Delta^{2/3}\log\Delta$.

We will show that with positive probability none of these events occur. 
For each $v \in L$, the events $A_v$ depends only on the assignment of memberships of $Y$ to vertices in $N(v)$. 
For each $v \in H$, the event $B_v$ depends only on the assignment of memberships of $Y$ to vertices in $N^*(v)$. 
It follows that letting $S_v$ be the events indexed  by vertices  at distant at most four from $v$, the event indexed by $v$ is mutually independent of the set of events not in $S_v$. 
So each of these events is mutually independent of a set of all but at most $\Delta^4$ of the other events.
By the Lov\'{a}sz Local Lemma (Theorem \ref{lll}), it suffices to show that the probability of each of these events is at most $\frac{1}{5\Delta^4}$.

We now calculate their probabilities.

For every $v \in L$ with $|N(v) \cap H|>20$, since $p =\frac{800\log \Delta}{\Delta^{1/3}}$ and $|N(v) \cap H| >20$, we have $\Pee(A_v)= p^{|N(v) \cap H|}<\frac{1}{5\Delta^4}$ when $\Delta$ is sufficiently large.

Now we bound $\Pee(B_v)$.
Since for any subset $Z$ of $H$, $\E[|Y \cap Z|] = p|Z|$, so for every $v \in H$, $\E[|Y \cap N(v)|] = p|N(v)| \geq 800\log\Delta$ and $\E[|Y \cap N^*(v)|]=p|N^*(v)| \leq 16000\Delta^{2/3}\log\Delta$.
Furthermore, $|Y \cap Z|$ is determined by  an independent choice (or permutation) for each vertex,
each such choice changes the size of the set by at most one and we can certify that $|Y \cap Z| \ge s$  by specifying $s$ elements in $Z$ which we chose to put in $Y$.
So by McDiarmid's Permutation Inequality (Corollary \ref{permutation}), 
\begin{align*}
    & \Pee(||Y \cap Z| - \E[|Y \cap Z|]| > \frac{1}{2} \cdot \E[|Y \cap Z|]+6\sqrt{2\E[|Y \cap Z|]}+16) \\
    \leq & 4e^{-\frac{(\frac{1}{2}\E[|Y \cap Z|])^2}{8(\E[|Y \cap Z|]+\frac{1}{2}\E[|Y \cap Z|])}} = 4e^{-\frac{\E[|Y \cap Z|]}{48}}.
\end{align*}
Since for large $n$ we also have $6\sqrt{2n}+16 <\frac{n}{20}$, applying this inequality yields that for large $\E[|Y \cap Z|]$,
$$\Pee(||Y \cap Z| - \E[|Y \cap Z|]| \geq (\frac{1}{2}+\frac{1}{20}) \cdot \E[|Y \cap Z|]) \leq 4e^{-\frac{\E[|Y \cap Z|]}{48}}.$$
Taking $Z=N(v)$ and $Z=N^*(v)$ yields that for large $\Delta$,
\begin{align*}
     & \Pee(B_v) \\
     \leq & \Pee(|N(v) \cap Y| < 360\log\Delta) + \Pee(|N^*(v) \cap Y| >25000\Delta^{2/3}\log\Delta) \\
     = & \Pee(|N(v) \cap Y| < \frac{360}{800} \cdot 800\log\Delta) + \Pee(|N^*(v) \cap Y| >\frac{25000}{16000} \cdot 16000\Delta^{2/3}\log\Delta) \\
     \leq & \Pee(|N(v) \cap Y| < \frac{360}{800} \cdot \E[|N(v) \cap Y|]) + \Pee(|N^*(v) \cap Y| >\frac{25000}{16000} \cdot \E[|N^*(v) \cap Y|]) \\
     = & \Pee(|N(v) \cap Y| < (\frac{1}{2}-\frac{1}{20})\E[|N(v) \cap Y|]) + \Pee(|N^*(v) \cap Y| >(\frac{3}{2}+\frac{1}{16})\E[|N^*(v) \cap Y|]) \\
     \leq & \Pee(||Y \cap N(v)| - \E[|Y \cap N(v)|]| > (\frac{1}{2}+\frac{1}{20}) \cdot \E[|Y \cap N(v)|]) \\
      & + \text{   } \Pee(||Y \cap N^*(v)| - \E[|Y \cap N^*(v)|]| > (\frac{1}{2}+\frac{1}{20}) \cdot \E[|Y \cap N^*(v)|]) \\
     \leq & 4e^{-\frac{\E[|Y \cap N(v)|]}{48}} + 4e^{-\frac{\E[|Y \cap N^*(v)|]}{48}} \\
     \leq & 8e^{-\frac{\E[|Y \cap N(v)|]}{48}} \leq 8e^{-\frac{800\log\Delta}{48}} \leq \frac{1}{5\Delta^4}. 
\end{align*}
\end{proof}

Let $c=\lceil 10^5\Delta^{2/3}\log\Delta \rceil$.
Consider the random process where we assign each vertex $y$ of $Y$ a uniform random colour $c_y$ from $\{1,...,c\}$. 
For each $y \in Y$ with no $z \in N^*(y) \cap Y$ with $c_z=c_y$, define $f(y)=c_y$.
By the definition of $N^*$, $f$ does not colour two neighbours of a vertex $u$ with degree at most $20$ with the same colour.
So if for every $v \in H$ with $N(v) \subseteq H$, there exists $y \in N(v) \cap Y$ such that $c_y \neq c_z$ for every $z \in ((N^*(y) \cup N(v))-\{y\}) \cap Y$, then $f$ is a desired colouring for this lemma by Statement 1 of Lemma \ref{goodY}.

For every $v \in H$ with $N(v) \subseteq H$, let $C_v$ be the event that for every $y \in N(v) \cap Y$, $c_y = c_z$ for some $z \in ((N^*(y) \cup N(v))-\{y\}) \cap Y$. 
Hence to prove this lemma, it suffices to show that with positive probability, none of these events occurs.

Note that each $C_v$ only depends on the assignment of colours to  vertices   at distance at most three  from $v$.
So each of these events is mutually independent of a set of all but at most $\Delta^6$ of the other events.
By the Lov\'{a}sz Local Lemma (Theorem \ref{lll}), it suffices to show that the probability of each of these events is at most $\frac{1}{5\Delta^6}$.

Let $v \in H$ with $N(v) \subseteq H$.
To prove $\Pee(C_v) \leq \frac{1}{5\Delta^6}$, it suffices to show that for any choice $I$ of a colouring of $Y-N(v)$, the conditional probability $\Pee(C_v|I) \leq \frac{1}{5\Delta^6}$.

For every $w \in N(v) \cap Y$, we say that $w$ is {\it bad} if there exists $u \in ((N^*(w) \cup N(v))-\{w\}) \cap Y$ with $c_w=c_u$, and we let $D_w$ be the event that $w$ is bad.
For any choice $I$ of a colouring of $Y-N(v)$ and every $w \in N(v) \cap Y$, we have $$\Pee(D_w|I) \leq \frac{|N^*(w) \cap Y|+(|N(v) \cap Y|-1)}{c} \leq \frac{2 \cdot 25000\Delta^{2/3}\log\Delta-1}{c} < \frac{1}{2}$$ by Statement 2 of Lemma \ref{goodY}. 
Therefore, for any choice $I$ of a colouring of $Y-N(v)$, 
$$\E[|\{w \in N(v) \cap Y: w \text{ is bad}\}| \text{   } | \text{  } I] < \frac{1}{2} \cdot |N(v) \cap Y|.$$

Furthermore, given the choice of a colouring of $Y-N(v)$, $\{w \in N(v) \cap Y: w$ is bad$\}$ depends only on our choice of a colour for each vertex in $N(v) \cap Y$, and changing such a choice can affect the value of this random variable by at most two. 
So applying Azuma's inequality (Theorem \ref{azuma}), we obtain:
\begin{align*}
    \Pee(C_v|I) = & P(|\{w \in N(v) \cap Y: w \text{ is bad}\}|=|N(v) \cap Y| \text{   }| \text{   } I) \\
    \leq & 2e^\frac{-(|N(v) \cap Y|/2)^2}{2|N(v) \cap Y| \cdot 2^2} \\
    \leq & 2e^\frac{-|N(v) \cap Y|}{32} \\
    \leq & 2e^{-\frac{1}{32} \cdot 360\log\Delta} \le 2e^{-10\log \Delta} \leq \frac{1}{5\Delta^6}.
\end{align*}
This proves the lemma.

\section{Proof of Lemma \ref{partitiontheorem_simple}} \label{sec:proofPartitiontheorem}

Recall $t= \lceil 18\Delta^{1/3} \rceil$.
We construct the partition $\{H_i: i \in [t]\}$ of $H'$ by considering the random process where each vertex makes an independent and random choice of one of the $t$ parts.
For every $v \in H'$, let $h_v$ be the index in $[t]$ such that $v \in H_{h_v}$.

For $v \in L'$ with at least $50$ neighbours in $H'$, we let $A_v$ be the event that there is no $i \in [t]$ such that  $|N_{G'}(v) \cap H_i|=1$.
For $v \in H'$, we let $B_v$ be the event that $v$ is adjacent in $G'$ to more than $\frac{\Delta+50\Delta^{2/3}\log \Delta -|N_{G'}(v) \cap L'|}{t}+\Delta^{1/3}\log \Delta$ vertices  in $H_{h_v}$, and we let $C_v$ be the event that there are more than $\frac{|N_{G'}(v) \cap L'|}{2t}+\Delta^{1/3}\log \Delta$ vertices in $H_{h_v}$ which are nearby $v$. 
For $v \in H'$, 
    \begin{itemize}
        \item let $S_v$ be the set of vertices $w \in N_{G'}(v) \cap L'$ with at most $50$ neighbours in $H'$, 
        \item let $\mu_v = \sum_{w \in S_v}|(N_{G'}(w)-N_{G'}(v)) \cap H_i|$, and 
        \item let $D_v$ be the event that $\mu_v>\Delta^{1/3}\log \Delta$.
    \end{itemize}
To prove this lemma, it suffices to show that with positive probability, none of these events happens.
We see that these events depend only on the partitioning on the first and second neighbourhoods of $v$.
So for each $v$, there is a set $E_v$ containing at most $3\Delta^4$ events such that the at most three events indexed by $v$ are mutually independent of  the set of events not contained in $E_v$.
By the Lov\'{a}sz Local Lemma (Theorem \ref{lll}), it suffices to show that the probability of each event is at most $\frac{1}{20\Delta^4}$.

\begin{lemma}
For every $v \in L'$ with $|N_{G'}(v)| \geq 50$, $\Pee(A_v) \leq \frac{1}{20\Delta^4}$.
\end{lemma}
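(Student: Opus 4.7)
The event $A_v$ depends only on the random choice of a part in $[t]$ made independently and uniformly by each of the $n := |N_{G'}(v) \cap H'|$ neighbours of $v$ in $H'$. Since $v \in L' \subseteq L$ we have $n \leq \deg_G(v) \leq \Delta^{1/3}$, and by hypothesis $n \geq 50$, so $t = \lceil 18\Delta^{1/3}\rceil \geq 18n$. In balls-and-bins language, $A_v$ is the event that when $n$ balls are placed uniformly at random into $t$ bins, no bin receives exactly one ball.

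My plan is to bound $\Pee(A_v)$ by enumerating bin-assignments according to the set partition $\pi$ of $[n]$ they induce (two balls lie in the same block of $\pi$ iff they share a bin). If $\pi$ has $k$ blocks, the number of bin-assignments realising $\pi$ is $t(t-1)\cdots(t-k+1) \leq t^k$; for $A_v$ to hold, every block of $\pi$ must have size at least $2$, forcing $k \leq \lfloor n/2 \rfloor$. Writing $P(n,k)$ for the number of set partitions of $[n]$ into $k$ blocks of size at least $2$, this gives
\[
\Pee(A_v) \;\leq\; \sum_{k=1}^{\lfloor n/2 \rfloor} \frac{P(n,k)}{t^{n-k}}.
\]

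The central step is to show that, because $t \geq 18n$, this sum is dominated by its last term, for which $P(n, \lfloor n/2 \rfloor)$ (the number of pair-partitions of $[n]$) is at most $\sqrt{2}(n/e)^{n/2}$ by Stirling. I expect this to yield $\Pee(A_v) \leq C(n/(et))^{n/2}$ for some absolute constant $C$; dominance of the last term will follow either from a direct ratio calculation on the terms $P(n,k)t^k$ or, more cleanly, from a saddle-point estimate applied to the generating function $\sum_k P(n,k) t^k = n![x^n] e^{t(e^x - 1 - x)}$, which after substituting $x = y/t$ reduces to a Gaussian $e^{y^2/(2t)}$ (valid since $n/t \leq 1/18$). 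Substituting $t \geq 18\Delta^{1/3}$ then gives $\Pee(A_v) \leq C(n/(18e\Delta^{1/3}))^{n/2}$, and a short case check shows this beats $1/(20\Delta^4)$ for $\Delta \geq \Delta_0$: for $n$ close to $50$ the bound is $O(\Delta^{-n/6}) \leq O(\Delta^{-25/3})$, far smaller than $\Delta^{-4}$; for $n$ close to $\Delta^{1/3}$ the bound is roughly $(1/(18e))^{n/2}$, which is super-exponentially small. The hard part will be the dominance claim; the rest is routine estimation.
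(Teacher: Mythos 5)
Your reduction to a balls-and-bins event and the enumeration by set partitions are both sound, and the target bound $\Pee(A_v)\le C\,(n/(et))^{n/2}$ together with your closing two-regime check (small $n\approx 50$ giving $O(\Delta^{-25/3})$, large $n$ giving a super-exponentially small bound, with the estimate monotone in between) is the right shape. However, the step you yourself flag as central---that $\sum_{k\le n/2}P(n,k)/t^{n-k}$ is dominated by its last term---is not proved, and it is where essentially all the work lies. The ``direct ratio calculation'' would require a lower bound on $P(n,k+1)/P(n,k)$ for the associated Stirling numbers of the second kind, which is not a standard fact and not obviously uniform in $k$. The saddle-point route does work, but it has to be carried out; one clean way is the elementary coefficient inequality $[x^n]e^{t(e^x-1-x)}\le e^{t(e^r-1-r)}/r^n$ at $r=\sqrt{n/t}\le 1/\sqrt{18}$, combined with $e^r-1-r\le 0.6\,r^2$ on that range and Stirling's bound $n!\le e\sqrt{n}\,(n/e)^n$, giving $\Pee(A_v)\le e\sqrt{n}\,(n/(ct))^{n/2}$ for an absolute constant $c>1$. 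As written, the proposal has a genuine gap exactly at the heart of the argument.

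The paper obtains the dominance by an elementary switching argument rather than analytically. It defines $S_i$ to be the set of partitions of $H'$ in which exactly $i$ parts meet $N_{G'}(v)$, bounds $\Pee(A_v)\le \sum_{i\le n/2}|S_i|/\sum_i|S_i|$, and proves $|S_{i+1}|/|S_i|\ge 17\Delta^{1/3}/(4n)>4$ for every $i\le 3n/4$: from a partition in $S_i$, choose one of the $\ge 17\Delta^{1/3}$ parts disjoint from $N_{G'}(v)$ and move into it one of the $\ge n/4$ vertices of $N_{G'}(v)$ currently lying in a non-singleton part, obtaining a partition in $S_{i+1}$, with each target arising at most $n^2$ times. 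This ratio bound is precisely the ``dominance of the last term'' you need, but in combinatorial form; it avoids generating functions entirely. If you complete the saddle-point step, your route yields essentially the same final estimate, but the paper's argument is shorter and more self-contained.
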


\begin{proof}
We let $S_i$ be the set of partitions of $H'$ where there are $i$ partition elements intersecting $N_{G'}(v)$. 
We note that $\Pee(A_v)$ is at most the probability that we choose a partition in $S_j$ for some $j \le \frac{|N_{G'}(v)|}{2}$. 
So $$\Pee(A_v) \leq \frac{\sum_{i\le |N_{G'}(v)|/2} |S_i|}{\sum_{i \leq |N_{G'}(v)|}|S_i|} \leq \frac{\sum_{i\le |N_{G'}(v)|/2} |S_i|}{|S_{\lceil \frac{3}{4}|N_{G'}(v)|\rceil}|}.$$ 
We bound this probability by computing the ratio $\frac{|S_{i+1}|}{|S_i|}$ for $i \le  \frac{3|N_{G'}(v)|}{4} $.
We note that for such $i$ and any partition in the set $S_i$, the number of partition elements containing at least two vertices in $N_{G'}(v)$ is at least $\frac{|N_{G'}(v)|}{4}$. 
We can create an element of $S_{i+1}$ from an element $P$ of $S_i$ by choosing any of the at least $t-|N_{G'}(v)| \geq 17\Delta^{1/3}$ parts disjoint from $N_{G'}(v)$ and moving a vertex $w$ in $N_{G'}(v)$ contained in a nonsingleton part in $P$ to that part. 
Any element of $S_{i+1}$ can be obtained in this way from at most $|N_{G'}(v)|^2$ choices of a partition in $S_i$ obtained by specifying a vertex $w$ of $N_{G'}(v)$ and a part intersecting $N_{G'}(v)-\{w\}$. 
Hence $|S_{i+1}| \ge \frac{17 \Delta^{1/3}|N_{G'}(v)|}{4} \cdot |S_i| \cdot \frac{1}{|N_{G'}(v)|^2} = \frac{17 \Delta^{1/3}}{4|N_{G'}(v)|} \cdot |S_i|>4|S_i|$. 
Thus, $\sum_{i \leq |N_{G'}(v)|/2}|S_i| \le 2|S_{\lfloor |N_{G'}(v)|/2 \rfloor}|$ and 
    \begin{align*}
        |S_{\lceil \frac{3}{4}|N_{G'}(v)|\rceil}| \geq & (\frac{17 \Delta^{1/3}}{4|N_{G'}(v)|})^{\lceil \frac{3}{4}|N_{G'}(v)|\rceil - \lfloor \frac{1}{2}|N_{G'}(v)| \rfloor} |S_{\lfloor \frac{1}{2}|N_{G'}(v)| \rfloor}| \\
        \geq & (\frac{17 \Delta^{1/3}}{4|N_{G'}(v)|})^{\lceil |N_{G'}(v)|/4 \rceil}|S_{\lfloor \frac{1}{2}|N_{G'}(v)| \rfloor}|.
    \end{align*}
Therefore,
$$\Pee(A_v) \leq \frac{\sum_{i\le |N_{G'}(v)|/2} |S_i|}{|S_{\lceil \frac{3}{4}|N_{G'}(v)|\rceil}|} \leq \frac{2|S_{\lfloor \frac{1}{2}|N_{G'}(v)| \rfloor}|}{|S_{\lceil \frac{3}{4}|N_{G'}(v)|\rceil}|} \le 2 \cdot (\frac{4|N_{G'}(v)|}{17 \Delta^{1/3}})^{\lceil |N_{G'}(v)|/4 \rceil}.$$
If $|N_{G'}(v)| \geq 20 \log \Delta$, then $\Pee(A_v)<\frac{1}{20\Delta^4}$ since $\frac{4|N_{G'}(v)|}{17 \Delta^{1/3}}<\frac{1}{4}$; otherwise, for large $\Delta$, $\Pee(A_v)<2 \cdot (\frac{80 \log \Delta}{17 \Delta^{1/3}})^{\lceil N_{G'}(v)/4 \rceil} \leq 2 \cdot (\frac{80 \log \Delta}{17 \Delta^{1/3}})^{13} \le \frac{1}{20\Delta^4}$ since $|N_{G'}(v)| \geq 50$.
This shows that $\Pee(A_v) \leq \frac{1}{20\Delta^4}$.
\end{proof}

\begin{lemma}
For $v \in H'$, $\Pee(B_v) \leq \frac{1}{20\Delta^4}$ and $\Pee(C_v) \leq \frac{1}{20\Delta^4}$.
\end{lemma}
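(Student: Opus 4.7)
The plan is to view both $B_v$ and $C_v$ as concentration events for sums of i.i.d.\ indicator variables. Conditional on $h_v$, each vertex $u\neq v$ satisfies $h_u=h_v$ independently with probability $1/t$, and by the symmetry among the $t$ parts the probabilities of $B_v$ and $C_v$ do not depend on the actual value of $h_v$, so I shall work conditionally on an arbitrary fixed value of $h_v$. In both cases the relevant binomial mean will be at most $O(\Delta^{2/3})$ while the target deviation is $\Delta^{1/3}\log\Delta$, so Chernoff's inequality (Theorem~\ref{chernoff} when the mean exceeds $\Delta^{1/3}\log\Delta$, together with the standard multiplicative Chernoff bound for $\delta\ge 1$ when it does not) yields a tail probability of $2\exp(-\Omega(\log^2\Delta))$, which is below $\frac{1}{20\Delta^4}$ for $\Delta$ sufficiently large.

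For $B_v$, let $X=|N_{G'}(v)\cap H'\cap H_{h_v}|$; conditional on $h_v$, $X$ is binomial with parameters $|N_{G'}(v)\cap H'|$ and $1/t$. The key input is a degree bound $|N_{G'}(v)|\leq \Delta+50\Delta^{2/3}\log\Delta$: the $G$-degree of $v$ is at most $\Delta$; the second step of the construction of $G'$ adds at most $\frac{50\Delta}{\Delta^{1/3}/\log\Delta}=50\Delta^{2/3}\log\Delta$ edges incident to $v$ (the same count underlying the bound on $\Delta(G'[H'])$ in the paper); and the third step never increases $v$'s degree, since deleting any $v_L\in L\cap N(v)$ removes the edge $vv_L$ and adds at most one replacement edge incident to $v$. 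Therefore $\E[X\mid h_v]\leq (\Delta+50\Delta^{2/3}\log\Delta-|N_{G'}(v)\cap L'|)/t$, and $B_v$ forces $X-\E[X\mid h_v]>\Delta^{1/3}\log\Delta$, so the Chernoff estimate described above gives $\Pee[B_v]\leq \frac{1}{20\Delta^4}$.

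For $C_v$, I first bound by double counting the total number of vertices of $H'$ nearby $v$. If $u\in H'$ is nearby $v$, then by definition $u$ and $v$ share at least $100$ common neighbours $w\in L'$ with $|N_{G'}(w)|\leq 50$. Counting pairs $(u,w)$ with $u$ nearby $v$ and $w$ such a witness gives
\[
100\cdot |\{u\in H':u\text{ is nearby }v\}|\leq \sum_{\substack{w\in N_{G'}(v)\cap L' \\ |N_{G'}(w)|\leq 50}}(|N_{G'}(w)|-1)\leq 49\,|N_{G'}(v)\cap L'|,
\]
so at most $|N_{G'}(v)\cap L'|/2$ vertices in $H'$ are nearby $v$. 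Letting $Y$ count the nearby vertices that fall into $H_{h_v}$, we see that $Y$ is binomial conditional on $h_v$ with mean at most $|N_{G'}(v)\cap L'|/(2t)\leq \Delta^{2/3}/36$, and $C_v$ forces $Y-\E[Y\mid h_v]>\Delta^{1/3}\log\Delta$; the same Chernoff estimate gives $\Pee[C_v]\leq \frac{1}{20\Delta^4}$. The main steps requiring care are the degree bound for $B_v$ and the double-counting bound for $C_v$; once these are in hand, the Chernoff concentration is routine in the resulting parameter regime.
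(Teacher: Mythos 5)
The proposal is correct and takes essentially the same approach as the paper: in each case the conditional mean of the relevant binomial count is bounded by $\Delta^{2/3}$ (via the degree bound $\deg_{G'}(v)\le\Delta+50\Delta^{2/3}\log\Delta$ for $B_v$, and via double-counting the nearby witnesses for $C_v$), and the Chernoff bound applied to a deviation of $\Delta^{1/3}\log\Delta$ gives $2e^{-\log^2\Delta/3}\le\frac{1}{20\Delta^4}$. Your explicit conditioning on $h_v$ and the careful check that the third construction step cannot increase $v$'s degree are points the paper leaves implicit, but the underlying argument is the same.
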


\begin{proof}
We bound the probability of $B_v$ for $v \in H'$.
Let $X_v$ be the number of vertices in $H_{h_v}$ adjacent to $v$ in $G'$.
Note that $\E[X_v] = \frac{|N_{G'}(v) \cap H'|}{t} \le \Delta^{2/3}$ and $\Delta + 50\Delta^{2/3}\log\Delta - |N_{G'}(v) \cap L'| \geq \Delta(G'[H'])- |N_{G'}(v) \cap L'| \geq |N_{G'}(v) \cap H'|$.
By the Chernoff Bound, when $\Delta$ is sufficiently large,
\begin{align*}
    \Pee(B_v) \leq & \Pee(X_v > \frac{\Delta + 50\Delta^{2/3}\log\Delta- |N_{G'}(v) \cap L'|}{t}+\Delta^{1/3}\log\Delta) \\
    \leq & \Pee(X_v > \E[X_v]+\Delta^{1/3}\log \Delta) \\
    \leq & \Pee(|X_v-\E[X_v]| > \Delta^{1/3}\log \Delta) \\
    \leq & 2e^{-\frac{\Delta^{2/3}\log^2\Delta}{3\E[X_v]}} \leq  2e^{-\frac{\log^2\Delta}{3}}  \leq  \frac{1}{20\Delta^4}.
\end{align*}

We bound the probability of $C_v$ for $v \in H'$ in a similar manner.
Let $R_v$ be the number of vertices in $H_{h_v}$ which are nearby $v$.
Note that there are at most $\frac{50|N_{G'}(v) \cap L'|}{100}=\frac{|N_{G'}(v) \cap L'|}{2}$ vertices nearby to $v$. 
Hence $\E[R_v] \le \frac{|N_{G'}(v) \cap L'|}{2t} \le \Delta^{2/3}$.
By the Chernoff Bound, when $\Delta$ is sufficiently large,
\begin{align*}
    \Pee(C_v) \leq & \Pee(R_v > \frac{|N_{G'}(v) \cap L'|}{2t}+\Delta^{1/3}\log\Delta) \\
    \leq & \Pee(R_v > \E[R_v]+\Delta^{1/3}\log\Delta) \\
    \leq & \Pee(|R_v-\E[R_v]| > \Delta^{1/3}\log \Delta) \\
    \leq & 2e^{-\frac{\Delta^{2/3}\log^2\Delta}{3\E[R_v]}} \leq  2e^{-\frac{\log^2\Delta}{3}}  \leq  \frac{1}{20\Delta^4}.
\end{align*}
\end{proof}

Hence it suffices to show  that for every $v \in H'$, $\Pee(D_v) \leq \frac{1}{20\Delta^4}$.
We fix a vertex $v \in H'$.
Note that it suffices to show that $\Pee(D_v|I) \leq \frac{1}{20\Delta^4}$ for any given information $I = (h_u: u \in (N_{G'}(v)  \cup \{v\}) \cap H')$ since $\Pee(D_v) = \sum_I\Pee(D_v,I) = \sum_I\Pee(D_v|I)P(I)$.

Let $I = (h_u: u \in (N_{G'}(v) \cup \{v\}) \cap H')$.
Note that for every $w \in N_{G'}(v)  \cap L'$ with $|N_{G'}(w)| \leq 50$, if $w$ is dangerous, then there exists $z \in N_{G'}(w) - N_{G'}(v)$ contained in the same part as $v$, and there exists a vertex $z' \in N_{G'}(w)-(N_{G'}(v) \cup \{v,z\})$ contained in a part as a vertex in $N_{G'}(w)-\{z'\}$, so 
    \begin{align*}
         & \Pee(w \text{ is dangerous }|I) \\
        \leq & (|N_{G'}(w)-N_{G'}(v)| \cdot \frac{1}{t}) \cdot (|N_{G'}(w)-N_{G'}(v)|-2) \cdot \frac{|N_{G'}(w)|-1}{t} \\
        \leq & |N_{G'}(w)|^3 \cdot \frac{1}{t^2} \leq \frac{50^3}{t^2};
    \end{align*} 
moreover, $w$ contributes at most $|N_{G'}(w)| \leq 50$ to $\mu_v$.
So $\E[\mu_v|I] \leq |S_v| \cdot 50 \cdot \frac{50^3}{t^2}$. 
Since $|S_v| \leq \Delta$, we have $\E[\mu_v|I] \leq 50^3 \Delta^{1/3}$. 

\begin{lemma}
We next expose the choice of partition elements containing the vertices in $\bigcup_{w \in S_v}N_{G'}(w)$ which are nearby but not adjacent in $G'$ to $v$. 
Then with probability at least $1-\frac{1}{40\Delta^4}$, the conditional expectation of $\mu_v$ given these choice is at most $50^5\Delta^{1/3}$.
\end{lemma}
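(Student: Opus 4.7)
The plan is to bound $\E[\mu_v \mid I, J]$ by splitting each $w \in S_v$ according to whether the exposed information $J$ already establishes the ``shared-part'' piece of $w$'s dangerousness condition. Recall that the earlier bound $\Pee(w \text{ dangerous} \mid I) \le 50^3/t^2$ was derived from two independent $1/t$ factors: one for the event that some suitable $u' \in N_{G'}(w) - N_{G'}(v)$, not nearby $v$, lies in $H_{h_v}$; and another for the event that some $u''$ lies in a partition class shared with another vertex of $N_{G'}(w)$. The first factor survives after exposing $J$, since the required $u'$ is not nearby $v$ and hence its partition element is unexposed; the second factor can be lost when $J$ exhibits such a shared pair inside $N_{G'}(w)$.

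Call $w \in S_v$ \emph{helped} if the $u''$-condition is satisfied by the exposed assignment alone, and \emph{unhelped} otherwise. For unhelped $w$ the earlier estimate $\Pee(w \text{ dangerous} \mid I, J) \le 50^3/t^2$ continues to apply, so the contribution to $\E[\mu_v\mid I,J]$ from unhelped $w$'s is at most $|S_v|\cdot 50 \cdot 50^3/t^2 \le \frac{50^4}{324}\Delta^{1/3}$. For helped $w$ only the $u'$ factor survives, and a short computation (writing $a_w = |A_w\cap H_{h_v}|$ for $A_w = (N_{G'}(w)-N_{G'}(v))\cap\{\text{nearby }v\}$ and $B_w = (N_{G'}(w)-N_{G'}(v))\cap\{\text{not nearby }v\}$) gives contribution at most $(a_w+1)|B_w|/t \le 51\cdot 50/t$ per $w$. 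Hence
\[
\E[\mu_v \mid I, J] \;\le\; \frac{50^4}{324}\Delta^{1/3} \;+\; \frac{2550}{t}\,|S_v^{\mathrm{helped}}|,
\]
and it suffices to prove that $|S_v^{\mathrm{helped}}| \le C\Delta^{2/3}$ with probability at least $1-\frac{1}{40\Delta^4}$ for some fixed constant $C$ (any $C \ge 3 \cdot 10^6$ is comfortably sufficient, given $t = \lceil 18\Delta^{1/3}\rceil$).

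A union bound over pairs in $N_{G'}(w)$ yields $\Pee(w\in S_v^{\mathrm{helped}} \mid I) \le \binom{|N_{G'}(w)|}{2}/t < 50^2/t$, so $\E[|S_v^{\mathrm{helped}}| \mid I] \le |S_v|\cdot 50^2/t = O(\Delta^{2/3})$---already the right order of magnitude. The remaining step, which I expect to be the main obstacle, is upgrading this first-moment estimate to the required $1-\frac{1}{40\Delta^4}$ tail bound. The natural tool is McDiarmid's Permutation Inequality (Corollary \ref{permutation}) applied to $X=|S_v^{\mathrm{helped}}|$: witnessing $X\ge s$ needs exhibiting only $s$ distinct helped $w$'s with the colliding pair for each (roughly $2s$ partition positions), and a single-choice swap changes $X$ by at most $\max_u|S_v\cap N_{G'}(u)|$. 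Controlling this Lipschitz constant---via $|N_{G'}(w)|\le 50$ for $w\in S_v$ together with the low $G'$-degree structure of $L'$---is the critical technical step. If that concentration proves insufficient, a backup is the higher-moment expansion $X \le \sum_{\{u_1,u_2\}} \mathbf{1}[h_{u_1}=h_{u_2}]\cdot|\{w\in S_v : u_1,u_2\in N_{G'}(w)\}|$, bounding $\E[X^k]$ combinatorially from pair-collision probabilities and invoking Markov with $k\asymp \log\Delta$ to convert it into the desired polynomially small tail.
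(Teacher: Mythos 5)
Your decomposition is exactly the paper's: ``helped'' and ``unhelped'' $w$'s are the paper's $W_v$ and $Z_v$, your expectation bounds for each piece match, and reducing the task to showing $|S_v^{\mathrm{helped}}|\le C\Delta^{2/3}$ with probability $\ge 1-\tfrac{1}{40\Delta^4}$ is the right move. The concentration inequality you propose (McDiarmid's Permutations Inequality) differs from the paper's choice (Azuma), but either works once the Lipschitz constant is controlled. The issue is that you explicitly leave that control as an open problem: ``Controlling this Lipschitz constant\ldots is the critical technical step. If that concentration proves insufficient, a backup is\ldots''. That is not a proof; it is a correct identification of the one nontrivial obstacle, followed by a shrug.

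The hint you offer for closing it is also slightly off target. You point to ``$|N_{G'}(w)|\le 50$ for $w\in S_v$ together with the low $G'$-degree structure of $L'$.'' The bound $|N_{G'}(w)|\le 50$ controls the degree of each individual $w$, not the number of $w\in S_v$ sharing a fixed $u$ as a neighbour, which is what the Lipschitz constant $c_u=\max_u|S_v\cap N_{G'}(u)|$ actually is. The fact that makes everything go through is instead a property of $H'$, built in when $G'$ was constructed: an edge was added in $G'$ between any two vertices of $H'$ having at least $\Delta^{1/3}/\log\Delta$ common neighbours $w\in L$ with $|N(w)\cap H'|\le 50$. Consequently, any $u$ whose partition choice is being swapped is a non-neighbour of $v$ in $G'$, hence shares fewer than $\Delta^{1/3}/\log\Delta$ such low-degree common neighbours with $v$, giving $c_u<\Delta^{1/3}/\log\Delta$. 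With that bound in hand, the paper feeds it into Azuma via $\sum_u c_u^2\le(\max_u c_u)(\sum_u c_u)\le\frac{\Delta^{1/3}}{\log\Delta}\cdot 50\Delta$, and your proposed McDiarmid route would close the gap just as well. But without explicitly invoking the added-edge rule in $G'$ to pin down $c_u$, your argument has a genuine hole at precisely the step you flagged.
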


\begin{proof}
This expectation comes from two subsets of vertices of $S_v$. 
One subset $Z_v$ consists of those $w$ in $S_v$ for which at the end of the exposure process there are still no nonneighbours of $v$ in $N_{G'}(w)$ which are assigned to the same partition element as another neighbour of $w$.  
For these $w$, the probability that $w$ is dangerous is still at most $\frac{|N_{G'}(w)|^3}{t^2} \leq \frac{50^3}{t^2}$ as before. 
Since each such $w$ contributes at most 50 to the conditional expectation of $\mu_v$, we know that the vertices in $Z_v$ contribute at most $|Z_v| \cdot 50 \cdot \frac{50^3}{t^2} \leq \Delta \cdot \frac{50^4}{18^2\Delta^{2/3}} \leq 50^4\Delta^{1/3}$ to the conditional expectation of $\mu_v$ in total.

The second subset $W_v$ consists of those $w$ in $S_v$ for which some (nearby) nonneighbour of $v$ in $N_{G'}(w)$ is assigned to the same partition element as another element of $N_{G'}(w)$. 
For these $w$, if $w$ is dangerous, then there exists a nonneighbour of $v$ in $N_{G'}(w)$ not nearby $v$ assigned to the same partition element as $v$, so the probability that $w$ is dangerous is at most $\frac{|N_{G'}(w)|}{t} \leq \frac{50}{t}$. 
So the conditional expectation that such a $w$ contributes to $\mu_v$ is at most $\frac{50^2}{t}$.
Hence the contribution to $\mu_v$ for all of $W_v$ is at most $\frac{50^2|W_v|}{t}$. 

So, our objective is to show that $50^4\Delta^{1/3} + \frac{50^2|W_v|}{t} \leq 50^5\Delta^{1/3}$ with probability at least $1-\frac{1}{40\Delta^4}$.
It suffices to show $\Pee(|W_v| \le 50^3\Delta^{2/3}) \geq 1-\frac{1}{40\Delta^4}$.

For a vertex $w$ in $S_v$, if $w \in W_v$, then some (nearby) nonneighbour of $v$ in $N_{G'}(w)$ is assigned to the same partition element as another element of $N_{G'}(w)$; so $\Pee(w \in W_v) \leq |N_{G'}(w)| \cdot \frac{|N_{G'}(w)|}{t} \leq \frac{50^2}{t}$.
So $\E[|W_v|] \leq |S_v| \cdot \frac{50^2}{t} \leq \frac{50^2\Delta^{2/3}}{18}$. 

Note that $|W_v|$ is determined by trials $T_u$'s (for $u \in \bigcup_{w \in S_v}N_{G'}(w)-N_{G'}(v)$ that are nearby $v$), where each $T_u$ says which partition element contains $u$.
For each such $u$, let $c_u$ be the maximum by which  changing the outcome of $T_u$ can change  $|W_v|$.
Then $c_u$ is at most the number of common neighbours of degree at most $50$ which $u$ has with $v$. 
By the way in which we constructed $G'$, $u$ has at most $\frac{\Delta^{1/3}}{\log \Delta}$ common neighbours with degree at most $50$ with $v$, so each $c_u$ is at most $\frac{\Delta^{1/3}}{\log \Delta}$.
Furthermore, the sum of the $c_u$'s is at most the sum of the size of these common neighbourhoods, so it is at most $50\Delta$.  
Hence $\sum_uc_u^2 \leq \max_uc_u \cdot \sum_uc_u \leq \frac{\Delta^{1/3}}{\log \Delta} \cdot 50\Delta = \frac{50\Delta^{4/3}}{\log\Delta}$. 
So, via an application of Azuma's inequality:

\begin{align*}
    \Pee(|W_v|>50^3\Delta^{2/3}) \leq & \Pee(W_v > E[W_v]+\frac{50^3\Delta^{2/3}}{2}) \\
    \leq & \Pee(|W_v-E[W_v]| > \frac{50^3\Delta^{2/3}}{2}) \\
    \leq & 2e^{-\frac{50^6\Delta^{4/3}/4}{2 \cdot 50\Delta^{4/3}/\log \Delta}} \leq  2e^{-100\log\Delta}  
    \leq  \frac{1}{40\Delta^4}.
\end{align*}
\end{proof}

So, to complete our proof that $\Pee(D_v) \le \frac{1}{20\Delta^4}$, it suffices to show that $\Pr(\mu_v> \Delta^{1/3}\log \Delta \ |Q) \leq \frac{1}{40\Delta^4}$ given a choice $Q$ of partition elements for the vertices in $\bigcup_{w \in S_v}N_{G'}(w)$ which are nearby but not adjacent in $G'$ to $v$ such that $\E[\mu_v|Q] \leq 50^5 \Delta^{1/3}$. 

We show it using a concentration argument. 
Now $\mu_v$ is determined by our choice of partition elements for the nonneighbours of $v$ which are not nearby to it.  
Furthermore, each such choice can change $\mu_v$ by at most $50 \cdot 100=5000$ and to certify that $\mu_v \geq s$ we need only specify at most $50s$ choices. 
Finally, the median of $\mu_v$ is at most $2\E[\mu_v|Q] \leq 2 \cdot 50^5\Delta^{1/3}$.
Hence applying Theorem \ref{concentrationtheorem}, we obtain for large enough $\Delta$

\begin{align*}
    \Pee(\mu_v>\Delta^{1/3}\log \Delta) 
    \leq &  2e^{\frac{-(\Delta^{1/3}\log \Delta-2 \cdot 50^5\Delta^{1/3})^2}{4 \cdot 50 \cdot 5000^2(\Delta^{1/3}\log \Delta)}} \\
    \leq &  2e^{\frac{-(\frac{\Delta^{1/3}\log \Delta}{2})^2}{4 \cdot 50 \cdot 5000^2(\Delta^{1/3}\log \Delta)}} \leq \frac{1}{40\Delta^4}.
\end{align*}

This proves the lemma.

\bigskip

\bigskip

\noindent{\bf Acknowledgement:} 
This work was conducted when the first author visited the Institute of Mathematics at Academia Sinica at Taiwan. He thanks the Institute of Mathematics at Academia Sinica for its hospitality.

\end{document}